\newcommand{\myfnsymbol}[1]{%
  \expandafter\@myfnsymbol\csname c@#1\endcsname
}
\newcommand{\@myfnsymbol}[1]{%
  \ifcase #1
  \or 
  \or 1
  \or $*$
  \or $\dagger$
  \or $\ddagger$
  \fi
}
\newcommand{\spmsntu}{\@myfnsymbol{2}}
\newcommand{\emailA}{\@myfnsymbol{3}}
\newcommand{\emailB}{\@myfnsymbol{4}}
\newcommand{\emailC}{\@myfnsymbol{5}}
\newcommand*{\transpose}{%
  {\mathpalette\@transpose{}}%
}
\newcommand*{\@transpose}[2]{%
  \raisebox{\depth}{$\m@th#1\intercal$}%
}
\newcommand{\labeltext}[2]{%
  \@bsphack
  \MakeLinkTarget*{#1}%
  \def\@currentlabel{#1}{\label{#2}}%
  \@esphack
}
\newcommand{\nosemic}{\renewcommand{\@endalgocfline}{\relax}}
\let\oldnl\nl
\newcommand{\nonl}{\renewcommand{\nl}{\let\nl\oldnl}}
\newtheorem{thm}{Theorem}[section]
\newtheorem{dfn}[thm]{Definition}
\newtheorem{lem}[thm]{Lemma}
\newtheorem{prop}[thm]{Proposition}
\newtheorem{remark}[thm]{Remark}
\newtheorem{ex}[thm]{Example}
\newtheorem{qn}[thm]{Question}
\DeclareMathOperator{\Char}{Char}
\DeclareMathOperator{\Min}{Min}
\DeclareMathOperator{\Quo}{Quo}
\DeclareMathOperator{\Disc}{Disc}
\DeclareMathOperator{\Var}{Var}
\DeclareMathOperator{\sgn}{sgn}
\DeclareMathOperator{\coeff}{coeff}
\DeclareMathOperator{\mult}{mult}
\DeclareMathOperator{\rem}{rem}
\preto\tabular{\setcounter{magicrownumbers}{0}}
\newcounter{magicrownumbers}
\preto\tabular{\setcounter{magicrownumbers2}{0}}
\newcounter{magicrownumbers2}
\title{Real-rooted integer polynomial enumeration algorithms and interlacing polynomials via linear programming}
\author{Gary R.W. Greaves \thanks{GRWG was partially supported by the Singapore Ministry of Education Academic Research Fund; grant numbers: RG14/24 (Tier 1) and MOE-T2EP20222-0005 (Tier 2).} \and Jeven Syatriadi}
\date{}
\begin{document}

\renewcommand{\thefootnote}{\myfnsymbol{footnote}}

\maketitle

\footnotetext[1]{Division of Mathematical Sciences, School of Physical and Mathematical Sciences, Nanyang Technological University, 21 Nanyang Link, Singapore 637371.}%
\footnotetext[1]{\textit{Email addresses}: \texttt{gary@ntu.edu.sg} (G.R.W. Greaves), \texttt{jeve0002@e.ntu.edu.sg} (J. Syatriadi).}%

\setcounter{footnote}{0}
\renewcommand{\thefootnote}{\arabic{footnote}}

\begin{abstract}
    We extend the algorithms of Robinson, Smyth, and McKee--Smyth to enumerate all real-rooted integer polynomials of a fixed degree, where the first few (at least three) leading coefficients are specified.
    Additionally, we introduce new linear programming algorithms to enumerate all feasible interlacing polynomials of a given polynomial that comes from a certain family of real-rooted integer polynomials.
    These algorithms are further specialised for the study of real equiangular lines, incorporating additional number-theoretic constraints to restrict the enumeration. 
    Our improvements significantly enhance the efficiency of the methods presented in previous work by the authors.
\end{abstract}

\section{Introduction} \label{sec:intro}

\subsection{Background} \label{subsec:background}

There is a long history and interest in the study of the properties of \emph{totally-real} and \emph{totally-positive algebraic integers}.
An important problem in this area of research, famously coined as the \emph{Schur--Siegel--Smyth trace problem} \cite{Borwein02}, dates back to a 1918 paper of Schur~\cite{Schur18}, along with the pioneering works of Siegel~\cite{Siegel45} and Smyth~\cite{Smyth84}.
The trace problem is covered in \cite[Section 14.3]{MS21} and a detailed chronology can be found in \cite{AP08}, which also contains Serre's letter to Smyth \cite[Appendix B]{AP08}.
More recently, Smith~\cite{Smith24} has made a significant advance on the trace problem, refuting a longstanding consensus.

In the 1960s, Robinson~\cite{Robinson64} devised and implemented a search algorithm to enumerate totally real algebraic integers of degree at most $6$ with span less than $4$.
Smyth~\cite{Smyth84} later adapted and improved Robinson's algorithm in relation to what is now known as the Schur--Siegel--Smyth trace problem. 
In 2004, McKee and Smyth~\cite{MS04} utilised the discriminant of a polynomial to further enhance the algorithm's efficiency and applied it to the study of special classes of algebraic integers.
Building upon this work, we develop algorithms to enumerate real-rooted integer polynomials, where we also include the \emph{reducible} ones, and particularly those that have multiple roots.

Part of our motivation stems from the study of \textit{real equiangular lines}.
A set of lines through the origin in an Euclidean space is called \emph{equiangular} if the angle between any pair of lines is the same \cite{GKMS16, LS73}.
Over the past few years, in a series of papers~\cite{GS24, GSY21, GSY23} the authors and Yatsyna have determined the maximum cardinality of systems of {equiangular lines} in various low dimensions.
A \emph{Seidel matrix} is a symmetric matrix whose diagonal entries are equal to zero and whose off-diagonal entries belong to the set $\{\pm 1\}$.
Due to the correspondence~\cite{GKMS16, LS73} between a Seidel matrix and an equiangular line system, the nonexistence of certain large systems of equiangular lines is equivalent to the nonexistence of certain Seidel matrices with prescribed characteristic polynomials.
An essential tool in our proofs was the enumeration of the characteristic polynomials of hypothetical Seidel matrices, which correspond to putative large systems of equiangular lines.
Our enumeration algorithms are based on modifications to the McKee--Smyth algorithm \cite{MS04}.
In this paper, we provide further improvements to these algorithms.
In addition to improving the efficiency of verifying our proofs for upper bounds on the cardinality of an equiangular line system, we anticipate that our algorithms are of independent interest.

Sets of interlacing polynomials have been the focus of recent mathematical breakthroughs, such as the construction of infinite families of bipartite Ramanujan graphs \cite{MSS15} and the solution of the Kadison--Singer problem \cite{MSS15II}.
Interlacing methods were also instrumental in the celebrated proof of the sensitivity conjecture \cite{Huang19}.
Real-rooted polynomials \cite{BHL88, CS07} and the theory of rational convex polytopes \cite{BZ06} have also found applications in combinatorics and graph theory.

\subsection{Main contributions and organisation} \label{subsec:main}

In this paper, we present all of our algorithms in the form of pseudocode.
Theorem~\ref{thm:realrootedalgo} in Section~\ref{sec:realrootedalgo} provides a rigorous assurance that our algorithms complete the search without missing any polynomials.
Furthermore, it allows us to implement \hyperref[alg:numrealrooted]{$\mathsf{AllRealRooted}$}, which extends the scope of the McKee--Smyth algorithm~\cite{MS04} to also cover the reducible polynomials.
To guarantee its termination, \hyperref[alg:numrealrooted]{$\mathsf{AllRealRooted}$} requires that at least the first three leading coefficients are specified.
The algorithm \hyperref[alg:endpoints]{$\mathsf{EndPoints}$} uses Lemma~\ref{lem:omegafloorceil} from Section~\ref{subsec:approxerr} to circumvent potential numerical approximation errors when determining integer interval endpoints required for \hyperref[alg:numrealrooted]{$\mathsf{AllRealRooted}$}.
We also use \hyperref[alg:endpoints]{$\mathsf{EndPoints}$} and Lemma~\ref{lem:omegafloorceil} for other algorithms in Section~\ref{sec:Seidelcharalgo} and Section~\ref{sec:LPalgo}.

In Section~\ref{sec:Seidelcharalgo}, we modify \hyperref[alg:numrealrooted]{$\mathsf{AllRealRooted}$} to enumerate \emph{Seidel-feasible polynomials}: monic real-rooted integer polynomials satisfying various necessary conditions that the characteristic polynomial of a Seidel matrix must satisfy.
The resulting algorithms are the main algorithms that we employ in \cite{GS24, GSY21, GSY23}.
We make some improvements by implementing additional number-theoretic constraints from \cite{GY19} in \hyperref[alg:checkmod]{$\mathsf{ModCheck}$}.
The algorithm \hyperref[alg:checkmod]{$\mathsf{ModCheck}$} refines and filters the output of \hyperref[alg:Seidelcharalgodd]{$\mathsf{FeasiblePartial}$}, which consists of polynomials of odd degree.
For \hyperref[alg:Seidelcharalgoeven]{$\mathsf{FeasibleEven}$}, the number-theoretic constraint is the relatively compact \emph{type~2} condition, which we will define in Section~\ref{sec:Seidelcharalgo}.

In Section~\ref{sec:LPalgo}, we derive Theorem~\ref{thm:gamma}, which enables us to leverage linear programming to enumerate the set of \emph{Seidel interlacing polynomials}.
Linear and semidefinite programming techniques have been applied to the problem of determining the maximum possible cardinality of equiangular lines \cite{BY14, LMFV22, KT19, Yu17}.
Our new algorithms \hyperref[alg:interlacingLPoddeven]{$\mathsf{InterlacingEven}$} and \hyperref[alg:interlacingLPevenodd]{$\mathsf{InterlacingPartial}$} substantially speed up the enumeration of the set of interlacing characteristic polynomials in \cite{GS24, GSY21, GSY23}.
For comparison, we evaluate the performance of our new algorithms on the computational tasks documented in \cite[Figure 5]{GSY23}.
There we can see that the overall computational time required for polynomials where their minimal polynomials are of degree $10$ is around 3 hours.
With a basic, direct implementation of \hyperref[alg:interlacingLPoddeven]{$\mathsf{InterlacingEven}$}, we can now complete the entire series of computations in under 5 minutes, achieving a speedup of roughly 85 to 90 times.

\section{Enumeration of real-rooted integer polynomials}
\label{sec:realrootedalgo}

\subsection{Approximation error in numerical analysis} \label{subsec:approxerr}

We briefly discuss approximation error in numerical analysis.
Let $\omega \in \mathbb{R}$ and let $\tilde{\omega}$ be a numerical approximation of $\omega$.
Then $\left\lvert \omega-\tilde{\omega} \right\rvert$ is referred to as the \textbf{absolute error} \cite{Gautschi12, Neumaier01, Ueberhuber97}.
In this paper, our primary concern regarding numerical errors is on determining the correct integer interval endpoints.
We will use Lemma~\ref{lem:omegafloorceil} below to avoid numerical approximation errors in our algorithms.

Let $x$ be a real number.
Following \cite{FS09}, we define $\lceil x \rfloor \coloneqq \left\lfloor x+\frac{1}{2} \right\rfloor$.
Here $\lceil x \rfloor$ is the nearest integer to $x$ and if $x = n+\frac{1}{2}$ for some integer $n$, we have that $\lceil x \rfloor = n+1$.

\begin{ex} \label{ex:approxerr}
    Let $\omega$ and $\tilde{\omega}$ be real numbers such that $\omega < 7 < \tilde{\omega}$ and $\left\lvert \omega - \tilde{\omega} \right\rvert < \varepsilon$ for some $\varepsilon <1/2$.
    Observe that $\lfloor \omega \rfloor = 6 = \left\lceil \tilde{\omega} \right\rfloor - 1$ and $\lceil \omega \rceil = 7 = \left\lceil \tilde{\omega} \right\rfloor$.
\end{ex}

Example~\ref{ex:approxerr} illustrates an instance where the exact value and its numerical approximation lie on the opposite sides of an integer.
The next lemma states a basic expression for the floor and ceiling of the exact value in terms of its numerical approximation, provided that the absolute error is less than $1/2$.

\begin{lem} \label{lem:omegafloorceil}
    Let $\omega$ and $\tilde{\omega}$ be real numbers such that $\left\lvert \omega - \tilde{\omega} \right\rvert < 1/2$.
    Then $\lceil \omega \rceil - \left\lceil \tilde{\omega} \right\rfloor \in \{0,1\}$ and $\lfloor \omega \rfloor - \left\lceil \tilde{\omega} \right\rfloor \in \{-1,0\}$.
\end{lem}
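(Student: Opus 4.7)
The plan is to locate $\omega$ within a length-$2$ interval centred near the nearest integer to $\tilde{\omega}$, and then to do a short case analysis on which unit subinterval contains $\omega$.

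First I would set $n \coloneqq \lceil \tilde{\omega} \rfloor = \lfloor \tilde{\omega} + 1/2 \rfloor$. By the definition of the floor function this is equivalent to the sandwich $n - 1/2 \le \tilde{\omega} < n + 1/2$. Combining this with the hypothesis $|\omega - \tilde{\omega}| < 1/2$, i.e.\ $\tilde{\omega} - 1/2 < \omega < \tilde{\omega} + 1/2$, I obtain the strict containment $\omega \in (n-1, n+1)$.

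Next I would split on whether $\omega$ lies in $(n-1, n]$ or in $(n, n+1)$. In the first case $\lceil \omega \rceil = n$ (with $\lceil \omega \rceil = n$ also when $\omega = n$) and $\lfloor \omega \rfloor \in \{n-1, n\}$ depending on whether $\omega < n$ or $\omega = n$; in the second case $\lceil \omega \rceil = n+1$ and $\lfloor \omega \rfloor = n$. Reading off the differences against $n = \lceil \tilde{\omega} \rfloor$ in each subcase immediately gives $\lceil \omega \rceil - n \in \{0,1\}$ and $\lfloor \omega \rfloor - n \in \{-1,0\}$, as required.

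There is no real obstacle here: the whole argument is a translation of the hypothesis into an interval containment followed by a trivial trichotomy. The only thing to be careful about is the tie-breaking convention for $\lceil \cdot \rfloor$ stated just before Example~\ref{ex:approxerr} (rounding halves up), which ensures the endpoint case $\tilde{\omega} = n - 1/2$ is handled consistently and does not produce an off-by-one in the definition of $n$.
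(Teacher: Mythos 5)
Your proof is correct and follows essentially the same path as the paper's: both establish the strict containment $\omega \in \bigl( \lceil \tilde{\omega} \rfloor - 1, \lceil \tilde{\omega} \rfloor + 1 \bigr)$ and then read off the floor and ceiling, with the paper obtaining that containment via the triangle inequality (using $\lvert \tilde{\omega} - \lceil \tilde{\omega} \rfloor \rvert \leqslant 1/2$) while you unpack the definition $\lceil \tilde{\omega} \rfloor = \lfloor \tilde{\omega} + 1/2 \rfloor$ directly. As Remark~\ref{remark:round} in the paper notes, the tie-breaking convention is not actually essential (any nearest-integer rounding works), so your closing caution about it is harmless but not load-bearing.
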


\begin{proof}
    Clearly, we have $\left\lvert \tilde{\omega} - \left\lceil \tilde{\omega} \right\rfloor \right\rvert \leqslant 1/2$.
    By the triangle inequality,
    \[
    \left\lvert \omega - \left\lceil \tilde{\omega} \right\rfloor \right\rvert \leqslant \left\lvert \omega - \tilde{\omega} \right\rvert + \left\lvert \tilde{\omega} - \left\lceil \tilde{\omega} \right\rfloor \right\rvert < 1 \implies \left\lceil \tilde{\omega} \right\rfloor - 1 < \omega < \left\lceil \tilde{\omega} \right\rfloor + 1.
    \]
    Hence, we conclude that $\lceil \omega \rceil = \left\lceil \tilde{\omega} \right\rfloor$ or $\lceil \omega \rceil = \left\lceil \tilde{\omega} \right\rfloor + 1$, while $\lfloor \omega \rfloor = \left\lceil \tilde{\omega} \right\rfloor - 1$ or $\lfloor \omega \rfloor = \left\lceil \tilde{\omega} \right\rfloor$.
\end{proof}

We will utilise Lemma~\ref{lem:omegafloorceil} repeatedly, throughout.

\begin{remark} \label{remark:round}
    Let $\mathsf{Round}(x) : \mathbb{R} \to \mathbb{Z}$ be a rounding-to-the-nearest-integer function with no particular rule of rounding choice when $x=n+\frac{1}{2}$ for some integer $n$.
    Since $\left\lvert \tilde{\omega} - \mathsf{Round}\left( \tilde{\omega} \right) \right\rvert \leqslant 1/2$, observe that Lemma~\ref{lem:omegafloorceil} still holds even if we replace the function $\lceil x \rfloor$ by $\mathsf{Round}(x)$.
\end{remark}

\subsection{Real-rooted integer polynomials with specified first few leading coefficients}
\label{subsec:realrootedtop3}

Let $p(x) \in \mathbb{R}[x]$ be a nonzero polynomial.
We call $p(x)$ \textbf{real-rooted} if for all $z \in \mathbb{C}$, $p(z) = 0$ implies that $z \in \mathbb{R}$.
An \textbf{integer polynomial} is an element of $\mathbb{Z}[x]$.
Let $p(x)$ be a nonzero integer polynomial.
We call $p(x)$ \textbf{reducible} if $p(x) = q(x) \cdot r(x)$ for some $q(x), r(x) \in \mathbb{Z}[x]$ where $q(x) \neq \pm 1$ and $r(x) \neq \pm 1$ \cite{Artin91, DF04}.
We present Algorithm~\ref{alg:numrealrooted}, which we refer to as \hyperref[alg:numrealrooted]{$\mathsf{AllRealRooted}$}, for enumerating real-rooted integer polynomials with fixed first few (at least three) leading coefficients and fixed degree.
All such polynomials that are reducible, including the ones that have multiple roots, are also enumerated by \hyperref[alg:numrealrooted]{$\mathsf{AllRealRooted}$}, which results from modifying an algorithm of McKee--Smyth~\cite{MS04}.
\hyperref[alg:numrealrooted]{$\mathsf{AllRealRooted}$} incorporates Lemma~\ref{lem:omegafloorceil}, which we implement in \hyperref[alg:endpoints]{$\mathsf{EndPoints}$}, to avoid numerical approximation errors.

Let $p(x) \in \mathbb{R}[x]$ be a nonzero polynomial and let $\mu$ be a complex number.
Define $\mult(\mu, p)$ to be the largest nonnegative integer $m$ such that $(x-\mu)^m$ divides $p(x)$.
By definition, $\mult(\mu, p) = 0$ means that $\mu$ is not a root of $p(x)$ or equivalently, $p(\mu) \neq 0$.
Additionally, if $\mu$ is a root of $p(x)$ then $\mult(\mu, p)$ is also the multiplicity of $\mu$ as a root of $p(x)$.
If $\mult(\mu, p)=1$ then we call $\mu$ a \textbf{simple root} of $p(x)$.
If $\mult(\mu, p)>1$ then we call $\mu$ a \textbf{multiple root} of $p(x)$.
In particular, if $\mult(\mu, p)=2$ then we call $\mu$ a \textbf{double root} of $p(x)$.
Let $p^\prime(x)$ denote the derivative of $p(x)$.

\begin{lem} \label{lem:dervmult}
    Let $p(x) \in \mathbb{R}[x]$ be a non-constant polynomial and let $\mu$ be a root of $p(x)$.
    Then $\mult(\mu, p) = \mult(\mu, p^\prime)+1$.
\end{lem}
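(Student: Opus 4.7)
The plan is a direct computation using the factorisation guaranteed by the definition of multiplicity. Since $\mu$ is a root of the non-constant polynomial $p(x)$, we have $m \coloneqq \mult(\mu, p) \geqslant 1$, and so by definition of $\mult$ we may write
\[
p(x) = (x-\mu)^m q(x)
\]
for some $q(x) \in \mathbb{R}[x]$ with $q(\mu) \neq 0$.

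Next, I would differentiate using the product rule and factor out the highest common power of $(x-\mu)$. Specifically,
\[
p^\prime(x) = m(x-\mu)^{m-1} q(x) + (x-\mu)^m q^\prime(x) = (x-\mu)^{m-1}\bigl[m\,q(x) + (x-\mu)\,q^\prime(x)\bigr].
\]
Let $r(x) \coloneqq m\,q(x) + (x-\mu)\,q^\prime(x)$. The key observation is that $r(\mu) = m\,q(\mu)$, which is nonzero because $m \geqslant 1$ is a nonzero real number and $q(\mu) \neq 0$. Hence $(x-\mu)$ does not divide $r(x)$, so the largest power of $(x-\mu)$ dividing $p^\prime(x)$ is exactly $m-1$, i.e.\ $\mult(\mu, p^\prime) = m - 1$. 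Rearranging gives the claimed identity.

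There is no real obstacle here beyond being careful at the boundary case $m = 1$, where the formula still correctly yields $\mult(\mu, p^\prime) = 0$, i.e.\ $p^\prime(\mu) = q(\mu) \neq 0$. The hypothesis that $\mu$ is a root of $p(x)$ is genuinely needed: without it one could have $\mult(\mu,p) = 0$ while $\mu$ is a critical point contributing positive multiplicity to $p^\prime$. The non-constancy of $p(x)$ ensures that $p^\prime(x)$ is nonzero so that $\mult(\mu, p^\prime)$ is well-defined, and working over $\mathbb{R}$ (characteristic zero) ensures $m \neq 0$ in the coefficient ring, which is exactly what makes $r(\mu) \neq 0$.
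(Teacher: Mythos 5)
Your proof is correct and follows essentially the same route as the paper: factor $p(x) = (x-\mu)^m q(x)$ with $q(\mu)\neq 0$, differentiate, pull out $(x-\mu)^{m-1}$, and observe the cofactor is nonzero at $\mu$. The only cosmetic difference is that the paper takes $q(x)\in\mathbb{C}[x]$, since the lemma is applied to roots $\mu$ that may a priori be complex, whereas you wrote $q(x)\in\mathbb{R}[x]$; the algebra is unchanged either way.
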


\begin{proof}
    Note that there exists a positive integer $m$ such that $\mult(\mu, p) = m$.
    It follows that $p(x) = (x-\mu)^m \cdot q(x)$ for some polynomial $q(x) \in \mathbb{C}[x]$ where $q(\mu) \neq 0$.
    Thus, we obtain
    \[
    p^\prime(x) = (x-\mu)^{m-1} \left( m \cdot q(x) + (x-\mu) \cdot q^\prime(x) \right).
    \]
    Observe that $m \cdot q(\mu) + (\mu-\mu) \cdot q^\prime(\mu) = m \cdot q(\mu) \neq 0$.
    This implies that $\mult(\mu, p^\prime) = m-1$.
\end{proof}

We use the next lemma to prove Theorem~\ref{thm:realrootedalgo}.

\begin{lem} \label{lem:realrootedcond}
    Let $p(x) \in \mathbb{R}[x]$ be a non-constant real-rooted polynomial.
    Let $\displaystyle P(x) = \int_0^x p(y) \dd{y}$ and let $C \in \mathbb{R}$.
    Then $P(x)+C$ is real-rooted if and only if for all roots $\mu$ of $p(x)$, we have 
    $P(\mu)+C \geqslant 0$ if $p^\prime(\mu) < 0$, $P(\mu)+C \leqslant 0$ if $p^\prime(\mu) > 0$, and $P(\mu)+C = 0$ if $p^\prime(\mu) = 0$.
\end{lem}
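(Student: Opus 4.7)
The plan is to set $g \coloneqq P + C$, so that $g' = p$ and $\deg g = n+1$ where $n \coloneqq \deg p$. Since $p$ is real-rooted, all critical points of $g$ are real; let the distinct roots of $p$ be $\mu_1 < \cdots < \mu_k$ with multiplicities $m_1, \ldots, m_k$. On each open interval between consecutive $\mu_i$ (and the two unbounded pieces beyond them), $p$ has constant sign, so $g$ is strictly monotonic there; and since $p$ changes sign at $\mu_i$ exactly when $m_i$ is odd, $\mu_i$ is a strict local extremum of $g$ precisely when $m_i$ is odd, a local maximum if $p'(\mu_i) < 0$ and a local minimum if $p'(\mu_i) > 0$. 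By Lemma~\ref{lem:dervmult} applied to $p$, the condition $p'(\mu_i) = 0$ is equivalent to $m_i \geqslant 2$; and by Lemma~\ref{lem:dervmult} applied to $g$, whenever $\mu_i$ happens to be a root of $g$ its multiplicity there is $m_i + 1$.

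For the forward direction I would take $g$ to be real-rooted with distinct roots $r_1 < \cdots < r_s$ of multiplicities $\ell_j$ summing to $n+1$. Lemma~\ref{lem:dervmult} shows each $r_j$ is a root of $p$ of multiplicity $\ell_j - 1$, while Rolle's theorem supplies at least one additional root of $p$ strictly inside each gap $(r_j, r_{j+1})$; the tally $\sum_j(\ell_j - 1) + (s-1) = n$ matches $\deg p$, so these contributions are sharp, each interior-gap root of $p$ is simple, and no $\mu_i$ lies outside $[r_1, r_s]$. In particular every $\mu_i$ with $m_i \geqslant 2$ must coincide with some $r_j$, so $g(\mu_i) = 0$ whenever $p'(\mu_i) = 0$. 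A simple $\mu_i$ lying strictly in some gap $(r_j, r_{j+1})$ is the unique local extremum of $g$ in that gap, and since $g$ has constant nonzero sign on the open gap, $g(\mu_i) > 0$ at a local maximum and $g(\mu_i) < 0$ at a local minimum; a simple $\mu_i$ equal to some $r_j$ has $g(\mu_i) = 0$, trivially consistent with either inequality.

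For the converse I plan to count real roots of $g$ with multiplicity and show the total is at least $n+1 = \deg g$. The hypothesis $g(\mu_i) = 0$ when $p'(\mu_i) = 0$ together with Lemma~\ref{lem:dervmult} immediately contributes $m_i + 1$ roots at each such $\mu_i$. Partitioning $\mathbb R$ using the strict extrema of $g$ (the $\mu_i$ with $m_i$ odd), $g$ is strictly monotonic on each open piece and so has at most one distinct root there; if that root is an interior $\mu_i$ with $m_i$ even, it is already counted with multiplicity $m_i + 1$, and otherwise it is a simple root supplied by a strict sign change detected by the intermediate value theorem. Reading off the end-behaviour of $g$ from $\sgn$ of the leading coefficient of $p$ and the parity of $n+1$, and using the alternation of maxima and minima at the strict extrema together with the sign hypothesis on $g$ there, an interval-by-interval tally shows that the contributions add up to exactly $n+1$.

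I expect the main obstacle to be the bookkeeping in the converse. In particular, a strict extremum $\mu_i$ with $g(\mu_i) = 0$ is already a root of $g$ of multiplicity $m_i + 1 \geqslant 2$, and one must verify that the resulting ``lost'' sign change on one (or both) of its adjacent monotonic pieces is precisely absorbed by the extra multiplicity supplied by Lemma~\ref{lem:dervmult}. Organising the tally around the strict extrema and the monotonic pieces, and handling the cases of $\sgn$ of the leading coefficient and the parity of $n+1$ uniformly, is what makes every case work out to the required count.
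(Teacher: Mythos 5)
Your forward direction is correct and uses a genuinely different argument from the paper: rather than invoking the interlacing of the roots of $p$ between those of $g$, you count roots via Rolle's theorem and observe that $\sum_j(\ell_j-1) + (s-1) = n = \deg p$ forces sharpness — each interior-gap root of $p$ is simple and all roots of $p$ lie in $[r_1,r_s]$ — after which reading off the sign of $g$ on each gap gives the inequalities cleanly. This is a tidy alternative to the paper's interlacing argument, which reaches the same conclusions by case analysis on adjacent $\lambda_i$'s via the mean value theorem.

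The converse, however, is only a plan, not a proof. You yourself flag the ``interval-by-interval tally'' as the main obstacle and never carry it out. The trouble spots you name are exactly the ones that need work: when a strict extremum $\mu_i$ has $g(\mu_i)=0$, the sign change you would otherwise detect by the intermediate value theorem on one or both adjacent monotonic pieces is lost and must be accounted for by the higher multiplicity $m_i+1$ at $\mu_i$; you must also verify that on the unbounded pieces the end-behaviour of $g$ combines with the sign hypothesis at the first and last extremum to produce a root, and this splits into cases by the sign of the leading coefficient and the parity of $n+1$. None of this accounting is done, so the converse direction has a genuine gap. For comparison, the paper avoids the sign/parity case split by a different organization: it defines intervals $I_1,\dots,I_{n+1}$ from the roots $\mu_1\leqslant\cdots\leqslant\mu_n$ of $p$ \emph{with multiplicity} and explicitly constructs a well-defined assignment $\lambda(I_i)$ of one root of $Q=P+C$ to each interval — multiple roots of $p$ and simple roots with $Q(\mu_i)=0$ are handled by Lemma~\ref{lem:dervmult} supplying multiplicity $m+1$, gaps $(\mu_i,\mu_{i+1})$ by the intermediate value theorem plus Rolle's theorem for uniqueness, and the unbounded intervals by the local extremum at $\mu_1$ (resp.\ $\mu_n$) together with the sign of $Q$ there. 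If you want to salvage your counting approach, you will need to make the alternation of sign at the maxima and minima explicit and show the contributions sum to $n+1$ in every case; alternatively, adopt the paper's interval-assignment, which sidesteps the parity bookkeeping.
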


\begin{proof}
    Let $n \geqslant 1$ be the degree of $p(x)$.
    Let $\mu_1 \leqslant \mu_2 \leqslant \dots \leqslant \mu_n$ be the roots of $p(x)$.
    Define $Q(x) \coloneqq P(x)+C$.
    Suppose that $Q(x)$ is real-rooted with roots $\lambda_1 \leqslant \lambda_2 \leqslant \dots \leqslant \lambda_{n+1}$.
    Observe that the roots of $p(x)$ interlace the roots of $Q(x)$, that is,
    \[
    \lambda_1 \leqslant \mu_1 \leqslant \lambda_2 \leqslant \dots \leqslant \mu_n \leqslant \lambda_{n+1}.
    \]
    Suppose that $p^\prime(\mu_i)=0$ for some $i \in \{1,\dots,n\}$.
    By Lemma~\ref{lem:dervmult}, we have that $\mu_i$ is a multiple root of $p(x)$.
    Then $\mu_i = \mu_{i+1}$ or $\mu_{i-1} = \mu_i$, and hence $\mu_i = \lambda_{i+1}$ or $\mu_i = \lambda_i$, respectively.
    In either case, it follows that $Q(\mu_i)=0$.
    Next, suppose that $p^\prime(\mu_i) > 0$ for some $i \in \{1,\dots,n\}$.
    We will show that $Q(\mu_i) \leqslant 0$.
    For the sake of contradiction, assume that $Q(\mu_i) > 0$.
    Note that $Q(\lambda_i)=Q(\lambda_{i+1})=0$ so we have $\lambda_i < \mu_i < \lambda_{i+1}$.
    By the mean value theorem, there exists $a \in (\mu_i,\lambda_{i+1})$ such that $Q^\prime(a) = p(a) < 0$.
    On the other hand, there exists $\varepsilon >0$ small enough such that $\mu_i+\varepsilon < a$ and $p(\mu_i+\varepsilon) >0$.
    The intermediate value theorem implies that there exists $b \in (\mu_i+\varepsilon,a) \subseteq (\mu_i,\lambda_{i+1}) \subseteq (\mu_i,\mu_{i+1})$ such that $p(b)=0$, which is a contradiction.
    Using a similar argument, we find that if $p^\prime(\mu_i) < 0$ for some $i \in \{1,\dots,n\}$, then $Q(\mu_i) \geqslant 0$.

    Conversely, suppose that, for all $i \in \{1,\dots,n\}$, we have $Q(\mu_i) \geqslant 0$ if $p^\prime(\mu_i) < 0$, $Q(\mu_i) \leqslant 0$ if $p^\prime(\mu_i) > 0$, and $Q(\mu_i) = 0$ if $p^\prime(\mu_i) = 0$.
    Define the intervals: $I_1 \coloneqq (-\infty, \mu_1], I_i \coloneqq [\mu_{i-1}, \mu_i]$ for all integers $2 \leqslant i \leqslant n$, and $I_{n+1} \coloneqq [\mu_n, \infty)$.
    To prove that $Q(x)$ is real-rooted, we will show that there exists a function $\lambda: \{I_1, I_2, \dots, I_{n+1}\} \to \mathbb{R}$ such that
    \[
    Q(x) = q \prod_{i=1}^{n+1} \left( x-\lambda(I_i) \right)
    \]
    for some nonzero real number $q$.
    First, suppose that $\mu$ is a multiple root of $p(x)$ such that $\mult(\mu, p) = m$, where $2 \leqslant m \leqslant n$.
    It follows that $p^\prime(\mu) = 0$ and there exists $j \in \{1,2,\dots,n-m+1\}$ such that
    \[
    \mu = \mu_j = \mu_{j+1} = \dots = \mu_{j+m-1}.
    \]
    By our assumption, we have $Q(\mu) = 0$, so $\mu$ is a root of $Q(x)$.
    By Lemma~\ref{lem:dervmult}, we must have $\mult(\mu, Q) = m+1$.
    Hence, we let $\lambda(I_i) = \mu$ for all $i \in \{j, j+1, \dots, j+m\}$.
    Next, suppose that for some $1 \leqslant i \leqslant n$, we have that $\mu_i$ is a simple root of $p(x)$ and $Q(\mu_i)=0$.
    Then, by Lemma~\ref{lem:dervmult}, we have that $\mu_i$ is a double root of $Q(x)$ so we let $\lambda(I_i) = \lambda(I_{i+1}) = \mu_i$.

    Suppose that for some $1 \leqslant i \leqslant n-1$, we have $\mu_i < \mu_{i+1}$.
    We want to prove that there exists a unique $\lambda^\star \in I_{i+1}$ such that $Q(\lambda^\star)=0$.
    Hence, we can let $\lambda(I_{i+1})=\lambda^\star$.
    We first show that there exists at least one root of $Q(x)$ in $I_{i+1}$.
    By the argument above, this is true if at least one of $\mu_i$ and $\mu_{i+1}$ is a multiple root of $p(x)$.
    Hence, suppose that $\mu_i$ and $\mu_{i+1}$ are both simple roots of $p(x)$.
    We also assume that $Q(\mu_i) \neq 0$ and $Q(\mu_{i+1}) \neq 0$.
    Since $\mu_i$ and $\mu_{i+1}$ are simple roots of $p(x)$, then $p^\prime(\mu_i) \neq 0$ and $p^\prime(\mu_{i+1}) \neq 0$.
    We claim that $p^\prime(\mu_i)$ and $p^\prime(\mu_{i+1})$ must have the opposite signs.
    Otherwise, we first suppose that $p^\prime(\mu_i)$ and $p^\prime(\mu_{i+1})$ are both positive.
    Then there exist $\varepsilon_1,\varepsilon_2 > 0$ small enough such that $p(\mu_i+\varepsilon_1)>0$, $p(\mu_{i+1}-\varepsilon_2)<0$, and $\mu_i+\varepsilon_1 < \mu_{i+1}-\varepsilon_2$.
    By the intermediate value theorem, there exists $a \in (\mu_i+\varepsilon_1,\mu_{i+1}-\varepsilon_2) \subseteq (\mu_i,\mu_{i+1})$ such that $p(a)=0$, which is a contradiction.
    Similarly, assuming that both $p^\prime(\mu_i)$ and $p^\prime(\mu_{i+1})$ are negative will lead to a contradiction.
    Thus, $p^\prime(\mu_i)$ and $p^\prime(\mu_{i+1})$ must have the opposite signs.
    By our assumption, this implies that $Q(\mu_i)$ and $Q(\mu_{i+1})$ also have the opposite signs.
    Hence, by the intermediate value theorem, the interval $(\mu_i, \mu_{i+1})$ contains at least one root of $Q(x)$.
    Next, we will prove that there exists at most one root of $Q(x)$ in $I_{i+1}$.
    Suppose instead that there exist distinct $a, b \in I_{i+1}$ such that $Q(a)=Q(b)=0$.
    By Rolle's theorem, there will be another root of $p(x)$ in $(\mu_i, \mu_{i+1})$, which is a contradiction.
    Therefore, there exists precisely one root of $Q(x)$ in $I_{i+1}$.
    
    Lastly, consider $I_1$ and $I_{n+1}$.
    Suppose that $\mu_1$ is a simple root of $p(x)$ and $Q(\mu_1) \neq 0$.
    Then $p^\prime(\mu_1) \neq 0$ so suppose that $p^\prime(\mu_1)>0$.
    This means that $Q(x)$ has a local minimum at $x=\mu_1$.
    By our assumption, we also have $Q(\mu_1)<0$.
    Hence, there exists a unique real number $\lambda^\star$ less than $\mu_1$ such that $Q(\lambda^\star)=0$.
    Thus, we let $\lambda(I_1) = \lambda^\star$.
    The argument is similar for $p^\prime(\mu_1)<0$.
    Likewise, if $\mu_n$ is a simple root of $p(x)$ and $Q(\mu_n) \neq 0$, there also exists a unique real number $\lambda^\star$ greater than $\mu_n$ such that $Q(\lambda^\star)=0$.
    Hence, we let $\lambda(I_{n+1}) = \lambda^\star$.
    Combining all of these cases above, we can construct a well-defined function $\lambda$.
    Therefore, we conclude that $Q(x)$ is real-rooted.
\end{proof}

Let $P(x) \in \mathbb{R}[x]$ be a non-constant polynomial.
Note that if the degree of $P(x)$ is odd, then it has an equal number of local maximum and local minimum points.
Meanwhile, if the degree of $P(x)$ is even and it has a positive leading coefficient, then the number of local minimum points of $P(x)$ is exactly one more than the number of its local maximum points.

\begin{thm} \label{thm:realrootedalgo}
    Let $p(x) \in \mathbb{R}[x]$ be a real-rooted polynomial of degree $n \geqslant 2$ with a positive leading coefficient, and let $\mu_1 \leqslant \mu_2 \leqslant \dots \leqslant \mu_n$ be its roots.
    Let $\displaystyle P(x)=\int_0^x p(y) \dd{y}$.
    Let $h_1 \leqslant \dots \leqslant h_n$ be real numbers such that the multisets $\{h_1, h_2, \dots, h_n\}$ and $\{-P(\mu_1), -P(\mu_2), \dots, -P(\mu_n)\}$ are equal.
    Let $\displaystyle k = \left\lfloor \frac{n}{2} \right\rfloor$ and let $h$ be a real number such that $h_k \leqslant h \leqslant h_{k+1}$.
    Suppose that $P(x)+h$ is real-rooted and let $C$ be a real number.
    Then $P(x)+C$ is real-rooted if and only if $h_k \leqslant C \leqslant h_{k+1}$.
\end{thm}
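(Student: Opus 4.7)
The plan is to invoke Lemma~\ref{lem:realrootedcond} to describe the set $S \coloneqq \{C \in \mathbb{R} : P(x)+C \text{ is real-rooted}\}$ as the intersection of the constraints $C \geqslant -P(\mu_i)$ for $i$ with $p'(\mu_i) \leqslant 0$ and $C \leqslant -P(\mu_i)$ for $i$ with $p'(\mu_i) \geqslant 0$. Setting $h_{\max} \coloneqq \max\{-P(\mu_i) : p'(\mu_i) \leqslant 0\}$ and $h_{\min} \coloneqq \min\{-P(\mu_i) : p'(\mu_i) \geqslant 0\}$, this yields $S = [h_{\max}, h_{\min}]$, which is non-empty by the hypothesis $h \in S$. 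It thus suffices to prove $h_{\max} = h_k$ and $h_{\min} = h_{k+1}$.

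I would split into two cases depending on whether $p(x)$ has multiple roots. If all roots of $p(x)$ are simple, then Lemma~\ref{lem:dervmult} gives $p'(\mu_i) \neq 0$ for every $i$, so each $\mu_i$ is either a local maximum of $P(x)$ (contributing a lower-bound constraint) or a local minimum (contributing an upper-bound constraint). The remark preceding the theorem then gives exactly $k$ local maxima and $n-k$ local minima. Non-emptiness of $S$ forces $h_{\max} \leqslant h_{\min}$; since the $k$ local-max values of $-P(\mu_i)$ are all at most $h_{\max}$ and the $n-k$ local-min values are all at least $h_{\min}$, the sorted multiset $h_1 \leqslant \dots \leqslant h_n$ satisfies $h_k = h_{\max}$ and $h_{k+1} = h_{\min}$.

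In the remaining case where $p(x)$ has at least one multiple root, each multiple root $\nu$ of multiplicity $m \geqslant 2$ imposes an equality constraint $C = -P(\nu)$; non-emptiness of $S$ then forces all these values $-P(\nu)$ to coincide at some $h^*$, so $S = \{h^*\}$. To conclude I must show $h_k = h_{k+1} = h^*$, which amounts to the estimates $a \leqslant k-1$ and $b \leqslant n-k-1$, where $a$ and $b$ count the simple local maxima and simple local minima of $P(x)$. The main obstacle is proving these estimates. Let $E^o_{\max}, E^o_{\min}$ denote the numbers of odd-multiplicity extrema of $P(x)$ (each of multiplicity $\geqslant 3$ in $p(x)$) and let $F$ denote the number of even-multiplicity inflection points of $P(x)$ (each of multiplicity $\geqslant 2$ in $p(x)$). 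The remark gives the identity $a + E^o_{\max} = b + E^o_{\min}$ for $n$ even and $b + E^o_{\min} = a + E^o_{\max} + 1$ for $n$ odd, while the total multiplicity $\lambda = n - a - b$ contributed by multiple roots satisfies $\lambda \geqslant 3(E^o_{\max} + E^o_{\min}) + 2F$. Since at least one of $E^o_{\max}, E^o_{\min}, F$ is positive in this case, a short algebraic manipulation of these relations yields the required bounds on $a$ and $b$.
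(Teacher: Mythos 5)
Your proposal is correct and takes a genuinely different route from the paper's. The paper proves the two implications separately: for the ``only if'' direction it partitions the index set $I = \{1,\dots,n\}$ into two parts and uses Lemma~\ref{lem:realrootedcond} to squeeze $C$ between $h_k$ and $h_{k+1}$; for the harder ``if'' direction it assumes $h_k < h_{k+1}$, invokes the auxiliary hypothesis that $P(x)+h$ is real-rooted to rule out multiple roots of $p(x)$, and then bounds $-P(\mu)$ root by root. You instead read Lemma~\ref{lem:realrootedcond} as an exact description of the shift set $S = [h_{\max},\, h_{\min}]$, so the whole theorem collapses to the single identity $h_{\max}=h_k$, $h_{\min}=h_{k+1}$; the hypothesis on $h$ is used only once, to get $S\neq\varnothing$. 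The simple-root case then falls out of the local-extremum count $u=k$, $d=n-k$ together with $h_{\max}\leqslant h_{\min}$. The novelty is your multiple-root case: writing the parity identity relating $a+E^o_{\max}$ and $b+E^o_{\min}$ (from the remark before the theorem) together with the multiplicity budget $n-a-b\geqslant 3(E^o_{\max}+E^o_{\min})+2F$, and solving for $a$ and $b$. I checked the algebra: for $n$ even it gives $a\leqslant n/2-2E^o_{\max}-E^o_{\min}-F$ and $b\leqslant n/2-E^o_{\max}-2E^o_{\min}-F$; for $n$ odd it gives $a\leqslant (n-1)/2-2E^o_{\max}-E^o_{\min}-F$ and $b\leqslant (n+1)/2-E^o_{\max}-2E^o_{\min}-F$. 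In every case, at least one of $E^o_{\max},E^o_{\min},F$ being positive yields $a\leqslant k-1$ and $b\leqslant n-k-1$, which (since the $n-a-b$ values $-P(\nu)$ from multiple roots all equal $h^\star$) pins $h_k=h_{k+1}=h^\star$. Two small points worth making explicit: first, the sets defining $h_{\max}$ and $h_{\min}$ are both non-empty for $n\geqslant 2$, either because a multiple root contributes to both, or because a polynomial of degree $\geqslant 2$ with distinct simple roots has $p'$ alternating in sign across them; second, the phrase ``odd-multiplicity extrema of $P(x)$'' reads more naturally as ``extrema of $P$ arising from odd-multiplicity multiple roots of $p$,'' which your parenthetical does make clear. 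Your interval-of-shifts reformulation is cleaner conceptually, and in particular isolates exactly where the hypothesis ``$P(x)+h$ real-rooted'' enters, which the paper's proof leaves diffuse.
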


\begin{proof}
    Since the leading coefficient of $p(x)$ is positive, then the leading coefficient of $P(x)$ is also positive.
    Let $u$ be the number of local maximum points of $P(x)$ and let $d$ be the number of local minimum points of $P(x)$.
    Observe that $u = d \leqslant k$ if $n=2k$, and $u = d-1 \leqslant k$ if $n=2k+1$.
    Equalities hold precisely when all roots of $p(x)$ are distinct.
    Let $I =\{1,2,\dots,n\}$.
    Let $J \subseteq I$ such that $i \in J$ if and only if $p^\prime(\mu_i) < 0$.
    Clearly, we have $\lvert J \rvert \leqslant u \leqslant k$.
    Let $K \subseteq I$ such that $i \in K$ if and only if $p^\prime(\mu_i) > 0$.
    Clearly, we have $\lvert K \rvert \leqslant d \leqslant n-k$.

    First, suppose that $P(x)+C$ is real-rooted.
    We will prove that $h_k \leqslant C \leqslant h_{k+1}$ without having to use the assumption that $P(x)+h$ is real-rooted.
    Suppose that $n = 2k$.
    Observe that $\lvert J \rvert \leqslant k$ and $\lvert K \rvert \leqslant k$.
    We then partition $I$ into two subsets $I_1$ and $I_2$ such that $J \subseteq I_1$, $K \subseteq I_2$, and $\lvert I_1 \rvert = \lvert I_2 \rvert = k$.
    Let $i \in \left(I_1 \backslash J\right) \cup \left(I_2 \backslash K\right)$.
    Observe that $p^\prime(\mu_i) = 0$.
    Since $P(x)+C$ is real-rooted, by Lemma~\ref{lem:realrootedcond}, we have that $P(\mu_i)+C = 0$.
    Clearly, both $C \geqslant -P(\mu_i)$ and $C \leqslant -P(\mu_i)$ are true.
    Again, by Lemma~\ref{lem:realrootedcond}, we also have that $C \geqslant -P(\mu_i)$ for all $i \in J$, and $C \leqslant -P(\mu_i)$ for all $i \in K$.
    It follows that $C \geqslant -P(\mu_i)$ for all $i \in I_1$, and $C \leqslant -P(\mu_i)$ for all $i \in I_2$.
    Therefore, we conclude that $h_k \leqslant C \leqslant h_{k+1}$.
    A similar argument can be applied to the case where $n$ is odd.
    Suppose that $n = 2k+1$.
    Observe that $\lvert J \rvert \leqslant k$ and $\lvert K \rvert \leqslant k+1$.
    We then partition $I$ into two subsets $I_1$ and $I_2$ such that $J \subseteq I_1$, $K \subseteq I_2$, $\lvert I_1 \rvert = k$, and $\lvert I_2 \rvert = k+1$.
    For all $i \in \left(I_1 \backslash J\right) \cup \left(I_2 \backslash K\right)$, we have $p^\prime(\mu_i) = 0$.
    Applying Lemma~\ref{lem:realrootedcond}, we deduce that $C \geqslant -P(\mu_i)$ for all $i \in I_1$, and $C \leqslant -P(\mu_i)$ for all $i \in I_2$.
    Therefore, we conclude that $h_k \leqslant C \leqslant h_{k+1}$.

    Conversely, suppose that $h_k \leqslant C \leqslant h_{k+1}$.
    Note that if $h_k = h_{k+1}$ then $C=h=h_k=h_{k+1}$, which clearly implies that $P(x)+C$ is real-rooted.
    It remains to consider the case when $h_k < h_{k+1}$.
    Let $\mu$ be a root of $p(x)$.
    There exists a bijection $\varphi$ from $I$ to itself such that for all $i \in I$, we have $h_i = -P(\mu_{\varphi(i)})$.
    In particular, there exist $s, t \in I$ such that $\mu = \mu_t$ and $\varphi(s)=t$, which means that $h_s = -P(\mu_t) = -P(\mu)$.
    Define $I_{\text{left}} \coloneqq \{1,2,\dots,k\}$ and $I_{\text{right}} \coloneqq I \setminus I_{\text{left}}$.
    We will first show that $p^\prime(\mu) \neq 0$.
    For the sake of contradiction, assume instead that $p^\prime(\mu) = 0$.
    By Lemma~\ref{lem:dervmult}, we have that $\mult(\mu,p) \geqslant 2$.
    Since $P(x)+h$ is real-rooted, by Lemma~\ref{lem:realrootedcond}, we have that $P(\mu)+h = 0$.
    This implies that $h = h_k$ or $h = h_{k+1}$.
    Suppose first that $h = h_k$.
    Let $j \in I_{\text{right}}$.
    Then
    \begin{equation} \label{eq:varphi}
        -P(\mu_{\varphi(j)}) = h_j > h_k = h \implies P(\mu_{\varphi(j)})+h <0.
    \end{equation}
    By Lemma~\ref{lem:realrootedcond}, we obtain $p^\prime(\mu_{\varphi(j)}) > 0$.
    Hence, we have $\varphi(I_{\text{right}}) \subseteq K$, which further implies that $d = n-k$ and $u=k$.
    However, since $\mult(\mu,p) \geqslant 2$, we deduce that $h_k = h_{k-1} = -P(\mu)$.
    This yields $u < k$, which is a contradiction.
    In a similar manner, we can derive both $d=n-k$ and $d<n-k$ if $h = h_{k+1}$, which is a contradiction.
    Either case of $h=h_k$ or $h=h_{k+1}$ yields a contradiction, so we conclude that $p^\prime(\mu) \neq 0$.
    
    Next, suppose that $p^\prime(\mu) > 0$.
    We want to prove that $P(\mu)+C \leqslant 0$.
    By Lemma~\ref{lem:realrootedcond}, we obtain $P(\mu)+h \leqslant 0$.
    Note that $h_s = -P(\mu) \geqslant h \geqslant h_k$.
    We will prove that $h_s \geqslant h_{k+1}$.
    Assume otherwise that $h_s < h_{k+1}$.
    Then $h_k \geqslant h_s$, which means that $h = h_s = h_k = -P(\mu)$.
    Hence, we must have $t \notin \varphi(I_{\text{right}})$.
    By \eqref{eq:varphi} and Lemma~\ref{lem:realrootedcond}, we obtain $p^\prime(\mu_{\varphi(j)}) > 0$ for all $j \in I_{\text{right}}$.
    Since $p^\prime(\mu_t) = p^\prime(\mu) > 0$, we find that $\lvert K \rvert \geqslant n-k+1$, which is a contradiction.
    Therefore, $h_s \geqslant h_{k+1}$ and the desired inequality follows directly from $-P(\mu) = h_s \geqslant h_{k+1} \geqslant C$.
    Similarly, if $p^\prime(\mu) < 0$ then $h_s \leqslant h_k$, which immediately implies that $P(\mu)+C \geqslant 0$.
    Altogether, applying Lemma~\ref{lem:realrootedcond}, we conclude that $P(x)+C$ is real-rooted.
\end{proof}

Let $p(x), q(x) \in \mathbb{R}[x]$ be nonzero polynomials such that $\deg p(x) = d \geqslant 0$ and $\deg q(x) = e \geqslant 0$.
Let $a \in \mathbb{R}$ and let $\mu_1, \mu_2, \dots, \mu_d$ be complex numbers such that $\displaystyle p(x) = a \prod_{i=1}^d (x-\mu_i)$.
Similarly, let $b \in \mathbb{R}$ and let $\lambda_1, \lambda_2, \dots, \lambda_e$ be complex numbers such that $\displaystyle q(x) = b \prod_{j=1}^e (x-\lambda_j)$.
We use the following definition of the \textbf{resultant} of $p(x)$ and $q(x)$:
\[
\Res_x(p(x), q(x)) \coloneqq a^e b^d \prod_{\substack{1 \leqslant i \leqslant d, \mathstrut \\ 1 \leqslant j \leqslant e}} (\mu_i-\lambda_j).
\]
Moreover, we define the \textbf{discriminant} of $p(x)$ as follows:
\[
\Disc_x(p(x)) \coloneqq \frac{(-1)^{\frac{d(d-1)}{2}}}{a} \cdot \Res_x(p(x), p^\prime(x)).
\]
For more details on resultant and discriminant, we refer the reader to \cite{GKZ94}.

An important contribution of McKee and Smyth~\cite{MS04} was to use the discriminant to speed up the computation of the multiset $\{-P(\mu_1), -P(\mu_2), \dots, -P(\mu_n)\}$.
In Proposition~\ref{prop:discroots} below, we provide an explicit formula for the discriminant of any antiderivative of a given nonzero polynomial from $\mathbb{R}[x]$.

\begin{prop} \label{prop:discroots}
    Let $p(x) \in \mathbb{R}[x]$ be a nonzero polynomial of degree $n \geqslant 0$.
    Let $a \in \mathbb{R}$ and let $\mu_1,\mu_2,\dots,\mu_n$ be complex numbers such that $\displaystyle p(x) = a \prod_{i=1}^n (x-\mu_i)$.
    Let $\displaystyle P(x) = \int_0^x p(y) \dd{y}$ and let $C \in \mathbb{R}$.
    Then, we have
    \[
    \Disc_x(P(x)+C) = (-1)^{\frac{n(n+1)}{2}} \cdot a^n (n+1) \cdot \prod_{i=1}^n (P(\mu_i)+C).
    \]
\end{prop}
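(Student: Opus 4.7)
The plan is a direct computation from the definitions, using the key observation that the derivative of $P(x)+C$ is exactly $p(x)$.

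First I would note that $P(x)+C$ has degree $n+1$ and leading coefficient $a/(n+1)$, since $P(x)$ is obtained by integrating $p(x)$ termwise from $0$. Then, by the definition of discriminant applied to the polynomial $P(x)+C$ of degree $n+1$,
\[
\Disc_x(P(x)+C) = \frac{(-1)^{\frac{(n+1)n}{2}}}{a/(n+1)} \cdot \Res_x\bigl(P(x)+C,\; (P(x)+C)'\bigr),
\]
and since $(P(x)+C)' = p(x)$, the resultant appearing here is $\Res_x(P(x)+C, p(x))$.

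Next I would compute this resultant by means of the product formula. Using the version
\[
\Res_x(f,g) = (-1)^{\deg f \cdot \deg g} \cdot a_g^{\deg f} \prod_{j} f(\beta_j),
\]
where $a_g$ is the leading coefficient of $g$ and the $\beta_j$ are its roots, applied to $f = P(x)+C$ (of degree $n+1$) and $g = p(x) = a\prod_{i=1}^n(x-\mu_i)$ (of degree $n$, leading coefficient $a$, roots $\mu_i$), we obtain
\[
\Res_x(P(x)+C,\, p(x)) = (-1)^{n(n+1)} \cdot a^{n+1} \prod_{i=1}^n \bigl(P(\mu_i)+C\bigr) = a^{n+1} \prod_{i=1}^n \bigl(P(\mu_i)+C\bigr),
\]
where the sign collapses because $n(n+1)$ is always even.

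Finally I would substitute this back into the formula for the discriminant. Combining $\tfrac{1}{a/(n+1)} \cdot a^{n+1} = a^n(n+1)$ gives exactly
\[
\Disc_x(P(x)+C) = (-1)^{\frac{n(n+1)}{2}} \cdot a^n (n+1) \cdot \prod_{i=1}^n \bigl(P(\mu_i)+C\bigr),
\]
as desired. There is no real obstacle here; the only delicate point is bookkeeping the signs and leading coefficients. The case $n=0$ should be checked separately (an empty product gives $1$ and $P(x)+C$ is linear, with discriminant conventionally $1$), which is consistent with the formula.
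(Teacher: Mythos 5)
Your proof is correct and follows essentially the same route as the paper: both start from the definition of the discriminant with $(P(x)+C)' = p(x)$ and then evaluate $\Res_x(P(x)+C, p(x))$ via the product-over-roots formula, arriving at $a^{n+1}\prod_i (P(\mu_i)+C)$ before combining with the $\tfrac{n+1}{a}$ prefactor. The only cosmetic difference is that the paper explicitly factors $P(x)+C$ into its roots $\lambda_j$ and uses the definition's double product, whereas you invoke the equivalent one-sided identity $\Res_x(f,g) = (-1)^{\deg f\cdot\deg g}a_g^{\deg f}\prod_j f(\beta_j)$ directly; both are the same elementary computation.
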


\begin{proof}
    By definition, we have
    \[
    \Disc_x(P(x)+C) = (-1)^{\frac{n(n+1)}{2}} \cdot \frac{n+1}{a} \cdot \Res_x (P(x)+C, p(x)).
    \]
    Suppose that $\displaystyle P(x)+C = \frac{a}{n+1} \prod_{j=1}^{n+1} (x-\lambda_j)$ where $\lambda_j \in \mathbb{C}$ for all $j \in \{1,2,\dots,n+1\}$.
    Observe that
    \begin{align*}
        \Res_x (P(x)+C, p(x)) &= \Res_x (p(x), P(x)+C) = a^{n+1} \cdot {\left( \frac{a}{n+1} \right)}^n \cdot \prod_{i=1}^n \prod_{j=1}^{n+1} (\mu_i - \lambda_j) \\
        &= a^{n+1} \cdot \prod_{i=1}^n \left( \frac{a}{n+1} \prod_{j=1}^{n+1} (\mu_i - \lambda_j) \right) = a^{n+1} \cdot \prod_{i=1}^n (P(\mu_i)+C).
    \end{align*}
    Therefore, it follows that
    \[
    \Disc_x(P(x)+C) = (-1)^{\frac{n(n+1)}{2}} \cdot a^n (n+1) \cdot \prod_{i=1}^n (P(\mu_i)+C). \qedhere
    \]
\end{proof}

If we consider $\Disc_x(P(x)+C)$ as a univariate polynomial in $C$, then the multiset of its roots is precisely the multiset $\{-P(\mu_1), -P(\mu_2), \dots, -P(\mu_n)\}$.

Let $p(x) \in \mathbb{R}[x]$ be a nonzero polynomial of degree $n \geqslant 0$.
We define
\begin{equation} \label{eq:minpx}
    \Min(p, x) \coloneqq \frac{p(x)}{\gcd(p(x), p^\prime(x))}.
\end{equation}
Note that $\displaystyle \Min(p, x) \in \mathbb{R}[x]$ is a monic polynomial of degree $r$ for some $0 \leqslant r \leqslant n$.
The set of complex roots of $p(x)$ is identical to the set of complex roots of $\Min(p, x)$.
Moreover, each root of $\Min(p, x)$ is simple.
Then $p(x)$ is real-rooted if and only if $\Min(p, x)$ is real-rooted.
In \hyperref[alg:realrootedcheck]{$\mathsf{IsRealRooted}$}, we apply \emph{Sturm's theorem}~\cite[Theorem 2.50]{BPR06} on $\Min(p, x)$ to determine whether $p(x)$ is real-rooted without finding all of its roots.
First, we form the \emph{Sturm sequence} $q_0(x), q_1(x), \dots, q_s(x)$ of $\Min(p, x)$ where $s \leqslant r$.
We set $q_0(x) = \Min(p, x)$ and $q_1(x) = {q_0}^{\prime}(x)$.
For each $2 \leqslant i \leqslant s$, we let $-q_i(x) = \rem(q_{i-2}(x), q_{i-1}(x))$, which is the remainder of the \emph{Euclidean division} of $q_{i-2}(x)$ by $q_{i-1}(x)$.
The sequence terminates at $q_s(x)$, the first time we encounter a zero remainder $\rem(q_{s-1}(x), q_s(x)) = 0$.
If $r=0$ then the Sturm sequence of $\Min(p, x)$ consists of only one polynomial $q_0(x) = \Min(p, x)$.
Let $\sgn$ be the sign function, and let $\mathbf{v} = (v_0, v_1, \dots, v_s) \in \{\pm 1\}^{s+1}$ such that for all $0 \leqslant i \leqslant s$, we have $\displaystyle v_i = \lim_{x \to -\infty} \sgn(q_i(x))$.
If the degree of $q_i(x)$ is even, then $v_i$ is equal to the sign of the leading coefficient of $q_i(x)$.
If the degree of $q_i(x)$ is odd, then $v_i$ is equal to the negative of the sign of the leading coefficient of $q_i(x)$.
Similarly, let $\mathbf{w}= (w_0, w_1, \dots, w_s) \in \{\pm 1\}^{s+1}$ such that for all $0 \leqslant i \leqslant s$, we have $\displaystyle w_i = \lim_{x \to \infty} \sgn(q_i(x))$.
This means that $w_i$ is equal to the sign of the leading coefficient of $q_i(x)$.
Define $\Var(-\infty)$ and $\Var(+\infty)$ to be the numbers of sign changes in $\mathbf{v}$ and $\mathbf{w}$, respectively.
By Sturm's theorem, the number of real roots of $\Min(p, x)$ is equal to $\Var(-\infty)-\Var(+\infty)$.

\begin{algorithm}[htbp]
\capstart
\caption{$\mathsf{IsRealRooted}(p(x))$}
\label{alg:realrootedcheck}
\SetKwFunction{True}{True}
\SetKwFunction{False}{False}
\SetKwInOut{Input}{Input}
\SetKwInOut{Output}{Output}
\Input{A nonzero polynomial $p(x) \in \mathbb{R}[x]$.}
\Output{\True if $p(x)$ is real-rooted, and \False otherwise.}
    $\displaystyle \Min(p, x) \gets \frac{p(x)}{\gcd(p(x), p^\prime(x))}$\;
    Generate the Sturm sequence of $\Min(p, x)$\;
    Determine $\mathbf{v}$ and $\mathbf{w}$\;
    Compute $\Var(-\infty)$ and $\Var(+\infty)$\;
    \eIf{$\Var(-\infty)-\Var(+\infty) = \deg \Min(p, x)$}
    {\Return{\True}\;}{\Return{\False}\;}
\end{algorithm}

\hyperref[alg:numrealrooted]{$\mathsf{AllRealRooted}$} enumerates all real-rooted integer polynomials with fixed first few (at least three) leading coefficients.
Observe that if we are given just the first two leading coefficients, the number of the corresponding real-rooted integer polynomials could be infinite.
We will show that if we know the first few (at least three) leading coefficients, then the number of the corresponding real-rooted integer polynomials is finite.
Let $n \geqslant 2$ be an integer and let $t$ be an integer such that $3 \leqslant t \leqslant n+1$.
Let $\bm{v} \in \mathbb{Z}^t$ be a vector and for each $i \in \{1, 2, \dots, t\}$, let $\bm{v}(i)$ denote the $i$th entry of $\bm{v}$.
Suppose that $\bm{v}(1) \neq 0$.
Define the set $T(n,t,\bm{v})$ as the set of all real-rooted integer polynomials $\displaystyle p(x) = \sum_{i=0}^n a_i x^{n-i}$ of degree $n$ such that $a_{i-1} = \bm{v}(i)$ for all $i \in \{1, 2, \dots, t\}$.
The next proposition states that the cardinality of $T(n,t,\bm{v})$ is finite.

\begin{prop} \label{prop:finite}
    Let $n \geqslant 2$ be an integer and let $t$ be an integer such that $3 \leqslant t \leqslant n+1$.
    Let $\bm{v} \in \mathbb{Z}^t$ be a vector such that $\bm{v}(1) \neq 0$.
    Then $T(n,t,\bm{v})$ is a finite set.
\end{prop}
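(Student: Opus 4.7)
\medskip

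The plan is to show that once the first three leading coefficients $a_0 = \bm{v}(1), a_1 = \bm{v}(2), a_2 = \bm{v}(3)$ are fixed and $p(x)$ is required to be real-rooted, all of its roots lie in a bounded real interval. Once the roots are uniformly bounded, every remaining coefficient is also uniformly bounded (as a scaled elementary symmetric polynomial of the roots), and since the coefficients are integers this forces finiteness.

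First I would invoke Newton's identities to express the first two power sums $p_1 = \sum_{i=1}^n \mu_i$ and $p_2 = \sum_{i=1}^n \mu_i^2$ of the (complex) roots of $p(x) = \sum_{i=0}^n a_i x^{n-i}$ in terms of the known coefficients:
\[
p_1 = -\frac{a_1}{a_0}, \qquad p_2 = \frac{a_1^2 - 2a_0 a_2}{a_0^2}.
\]
These two quantities depend only on $\bm{v}(1), \bm{v}(2), \bm{v}(3)$. If $p_2 < 0$, then no real-rooted polynomial can have these prescribed leading coefficients (since the sum of squares of real numbers is nonnegative), so $T(n,t,\bm{v}) = \emptyset$ and we are done. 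Otherwise $p_2 \geqslant 0$, and since all roots $\mu_i$ are real, we have $\mu_i^2 \leqslant p_2$ for every $i$, giving the uniform bound $|\mu_i| \leqslant \sqrt{p_2} =: R$, a constant depending only on $\bm{v}$.

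Next I would bound the remaining coefficients. For every $p(x) \in T(n,t,\bm{v})$ and every $k \in \{0, 1, \dots, n\}$, the coefficient $a_k$ equals $(-1)^k a_0 \cdot e_k(\mu_1, \dots, \mu_n)$, where $e_k$ is the $k$th elementary symmetric polynomial. Using the uniform bound $|\mu_i| \leqslant R$, the triangle inequality gives
\[
|a_k| \;\leqslant\; |a_0| \cdot \binom{n}{k} \cdot R^k,
\]
a constant depending only on $n$ and $\bm{v}$. Hence each $a_k$ lies in a finite subset of $\mathbb{Z}$; as there are only $n+1$ coefficients in total, $T(n,t,\bm{v})$ is contained in a finite set, so it is itself finite.

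I do not expect any serious obstacle here: the only subtle point is remembering that Newton's identities determine $p_2$ from exactly the first three coefficients (which is why the hypothesis $t \geqslant 3$ is the sharp threshold — with only two coefficients the roots can have arbitrarily large second power sum and the conclusion fails, consistent with the remark preceding the proposition). Everything else is a straightforward symmetric-function bound followed by the observation that a bounded set of integers is finite.
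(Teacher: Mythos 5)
Your proof is correct and follows essentially the same approach as the paper: both use Newton's identities to show each real root satisfies $|\mu_i| \leqslant \sqrt{(a_1^2-2a_0a_2)/a_0^2}$ (treating the case $a_1^2 < 2a_0a_2$ as forcing $T(n,t,\bm{v}) = \varnothing$), and then bound the remaining coefficients. You are a bit more explicit than the paper in spelling out the elementary symmetric polynomial bound $|a_k| \leqslant |a_0|\binom{n}{k}R^k$, but the underlying argument is identical.
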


\begin{proof}
    Let $p(x) \in T(n,t,\bm{v})$.
    Let $a_0, a_1, \dots, a_n$ be integers and let $\lambda_1, \lambda_2, \dots, \lambda_n$ be real numbers such that
    \[
    p(x) = \sum_{i=0}^n a_i x^{n-i} = a_0 \prod_{i=1}^n (x-\lambda_i).
    \]
    By Newton's identities, for all $i \in \{1, 2, \dots, n\}$, we have $\displaystyle \left\lvert \lambda_i \right\rvert \leqslant \sqrt{\frac{a_1^2-2 a_0 a_2}{a_0^2}}$.
    Clearly, we have $T(n,t,\bm{v}) = \varnothing$ if $a_1^2 < 2 a_0 a_2$.
    Otherwise, the coefficient $a_i$ is bounded for all $t \leqslant i \leqslant n$.
    Therefore, we conclude that $T(n,t,\bm{v})$ is a finite set.
\end{proof}

We will now describe \hyperref[alg:numrealrooted]{$\mathsf{AllRealRooted}$}.
The inputs are an integer polynomial $\displaystyle \hat{p}(x) = \sum_{i=0}^n \hat{a}_i x^{n-i}$ where $n \geqslant 2$ and $\hat{a}_0>0$, together with an integer $t$ where $3 \leqslant t \leqslant n+1$.
The output is the set $T(n, t, \bm{v})$ where $\bm{v} \in \mathbb{Z}^t$ and $\bm{v}(i) = \hat{a}_{i-1}$ for all $i \in \{1,2,\dots, t\}$.
Let $p(x) \in T(n, t, \bm{v})$ and let $a_0, a_1, \dots, a_n$ be integers such that $\displaystyle p(x) = \sum_{i=0}^n a_i x^{n-i}$.
In particular, we have that $a_{i-1} = \bm{v}(i)$ for all $i \in \{1, 2, \dots, t\}$.
For each $i \in \{t-1, t, \dots, n\}$, we define the polynomial
\begin{equation} \label{eq:dervpoly}
    p_i(x) \coloneqq \dv[n-i]{x} p(x) = \sum_{j=0}^i \frac{(n-j)!}{(i-j)!} \cdot a_j x^{i-j}.
\end{equation}
Note that $p_n(x) = p(x)$.
For all $i \in \{t, t+1, \dots, n\}$, we also have the following relation
\begin{equation} \label{eq:consecderv}
    p_i(x) = \int_0^x p_{i-1}(y) \dd{y} + (n-i)! \cdot a_i.
\end{equation}
By \eqref{eq:dervpoly}, we have that the $(n-t+1)$th derivative of $p(x)$ is equal to $p_{t-1}(x)$, which is also equal to the $(n-t+1)$th derivative of $\hat{p}(x)$.
Since $p(x)$ is real-rooted, $p_{t-1}(x)$ must also be real-rooted.
Throughout \hyperref[alg:numrealrooted]{$\mathsf{AllRealRooted}$}, we run \hyperref[alg:realrootedcheck]{$\mathsf{IsRealRooted}$} to check whether any given nonzero integer polynomial is real-rooted.
If we check that $p_{t-1}(x)$ is not real-rooted, then we return $T(n, t, \bm{v}) = \varnothing$.
Otherwise, if $p_{t-1}(x)$ is real-rooted, starting from $i=t$ to $i=n$, we will determine all possible $p_i(x)$ for each possible $p_{i-1}(x)$ by using Theorem~\ref{thm:realrootedalgo}.
Suppose that for some $i \in \{t, t+1, \dots, n\}$, we have determined $p_{i-1}(x)$, which is real-rooted.
Let $\displaystyle P(x) = \int_0^x p_{i-1}(y) \dd{y}$.
Thus, as a univariate polynomial in $C$, $\displaystyle \Disc_x(P(x)+C)$ is real-rooted by Proposition~\ref{prop:discroots}.
Denote the roots by $h_1 \leqslant h_2 \leqslant \dots \leqslant h_{i-1}$.
Applying Theorem~\ref{thm:realrootedalgo}, we narrow down the range of $C$ to the closed interval $[h_k, h_{k+1}]$, where $\displaystyle k = \left\lfloor \frac{i-1}{2} \right\rfloor$.
By \eqref{eq:consecderv}, all possible values of $C$ are those integers in the closed interval $[h_k, h_{k+1}]$ that are divisible by $(n-i)!$.

\begin{remark}
    \label{rem:discNumerical}
    Since $C$ is an integer, one can compute the integer endpoints $\lceil h_k \rceil$ and $\lfloor h_{k+1} \rfloor$ without the need for floating-point arithmetic.
    This observation is a key insight due to McKee and Smyth~\cite{MS04}.
\end{remark}

In Line~\ref{numsolve} of \hyperref[alg:numrealrooted]{$\mathsf{AllRealRooted}$}, we use numerical approximations $\tilde{h}_k$ and $\tilde{h}_{k+1}$ of $h_k$ and $h_{k+1}$, respectively.
We assume that the absolute error of our numerical approximations is less than $1/2$.
We will apply Lemma~\ref{lem:omegafloorceil}, which we incorporate in \hyperref[alg:endpoints]{$\mathsf{EndPoints}$}.
We implement \hyperref[alg:endpoints]{$\mathsf{EndPoints}$} in the context of determining $\lceil h_k \rceil$ and $\lfloor h_{k+1} \rfloor$ for \hyperref[alg:numrealrooted]{$\mathsf{AllRealRooted}$} with the help of Lemma~\ref{lem:omegafloorceil}.
As previously noted in Remark~\ref{remark:round}, we use the more general $\mathsf{Round}(x)$ instead of strictly just $\lceil x \rfloor$.
The inputs of \hyperref[alg:endpoints]{$\mathsf{EndPoints}$} are a polynomial $P(x) \in \mathbb{R}[x]$ of degree at least three, together with two real numbers $\tilde{\omega}_1$ and $\tilde{\omega}_2$.
First, we initialise the output set as the empty set.
If $P(x)+\mathsf{Round}(\tilde{\omega}_1)$ is real-rooted, then we add $\mathsf{Round}(\tilde{\omega}_1)$ to the output set.
Otherwise, if $P(x)+\mathsf{Round}(\tilde{\omega}_1)+1$ is real-rooted, then we add $\mathsf{Round}(\tilde{\omega}_1)+1$ to the output set.
Next, if $P(x)+\mathsf{Round}(\tilde{\omega}_2)$ is real-rooted, then we add $\mathsf{Round}(\tilde{\omega}_2)$ to the output set.
Otherwise, if $P(x)+\mathsf{Round}(\tilde{\omega}_2)-1$ is real-rooted, then we add $\mathsf{Round}(\tilde{\omega}_2)-1$ to the output set.
Note that the output set will consist of at most two integers.

\begin{algorithm}[htbp]
\capstart
\caption{$\mathsf{EndPoints}(P(x), \tilde{\omega}_1, \tilde{\omega}_2)$}
\label{alg:endpoints}
\SetKwData{out}{out}
\SetKwFunction{True}{True}
\SetKwFunction{False}{False}
\SetKwInOut{Input}{Input}
\SetKwInOut{Output}{Output}
\Input{A polynomial $P(x) \in \mathbb{R}[x]$ of degree at least three, and two real numbers $\tilde{\omega}_1$, $\tilde{\omega}_2$.}
\Output{A set consisting of at most two integers.}
    $\out \gets \varnothing$\;
    
    \uIf{\hyperref[alg:realrootedcheck]{$\mathsf{IsRealRooted}$}$\left( P(x)+\mathsf{Round}(\tilde{\omega}_1) \right) = \True$}{Append $\mathsf{Round}(\tilde{\omega}_1)$ to \out\;}
    \ElseIf{\hyperref[alg:realrootedcheck]{$\mathsf{IsRealRooted}$}$\left( P(x)+\mathsf{Round}(\tilde{\omega}_1) + 1 \right) = \True$}{Append $\mathsf{Round}(\tilde{\omega}_1)+1$ to \out\;}
    
    \uIf{\hyperref[alg:realrootedcheck]{$\mathsf{IsRealRooted}$}$\left( P(x)+\mathsf{Round}(\tilde{\omega}_2) \right) = \True$}{Append $\mathsf{Round}(\tilde{\omega}_2)$ to \out\;}
    \ElseIf{\hyperref[alg:realrootedcheck]{$\mathsf{IsRealRooted}$}$\left( P(x)+\mathsf{Round}(\tilde{\omega}_2) - 1 \right) = \True$}{Append $\mathsf{Round}(\tilde{\omega}_2)-1$ to \out\;}

    \Return{\out}\;
\end{algorithm}

In Line~\ref{callendp} of \hyperref[alg:numrealrooted]{$\mathsf{AllRealRooted}$}, we run $\hyperref[alg:endpoints]{\mathsf{EndPoints}}(P(x), \tilde{h}_k, \tilde{h}_{k+1})$.
If the output is the empty set, then by Theorem~\ref{thm:realrootedalgo}, we can continue directly to the next possible $p_{i-1}(x)$.
Otherwise, we set $\lceil h_k \rceil$ and $\lfloor h_{k+1} \rfloor$ as the minimum and the maximum of the output set, respectively.
Lastly, in Line~\ref{rangeC}, we generate all possible real-rooted $P(x)+C$ where the range of $C$ is all of the integers from $\lceil h_k \rceil$ to $\lfloor h_{k+1} \rfloor$ that are divisible by $(n-i)!$.

\begin{algorithm}[htbp]
\capstart
\caption{$\mathsf{AllRealRooted}(\hat{p}(x), t)$}
\label{alg:numrealrooted}
\SetKwData{out}{out}
\SetKwData{points}{points}
\SetKwData{temp}{temp}
\SetKwInOut{Input}{Input}
\SetKwInOut{Output}{Output}
\SetKw{KwBy}{by}
\Input{An integer polynomial $\displaystyle \hat{p}(x) = \sum_{i=0}^n \hat{a}_i x^{n-i}$ where $n \geqslant 2$ and $\hat{a}_0>0$, and an integer $t \in \{3,4,\dots,n+1\}$.}
\Output{The set $T(n, t, \bm{v})$ where $\bm{v} \in \mathbb{Z}^t$ and $\bm{v}(i) = \hat{a}_{i-1}$ for all $i \in \{1,2,\dots, t\}$.}
    Initialise each of \temp and \out as an empty array\;
    $p_{t-1}(x) \gets$ $(n-t+1)$th derivative of $\hat{p}(x)$\;\label{arrp2x}
    \eIf{\hyperref[alg:realrootedcheck]{$\mathsf{IsRealRooted}$}$(p_{t-1}(x)) = \True$\label{arrstart}}
    {
    Append $p_{t-1}(x)$ to \out\;
    \For{$i = t$ \KwTo $n$}
        {
        \For{$j = 1$ \KwTo $\#\out$}
            {
            $\displaystyle k \gets \left\lfloor \frac{i-1}{2} \right\rfloor$, $p(x) \gets$ the $j$th entry of \out, and $\displaystyle P(x) \gets \int_0^x p(y) \dd{y}$\;
            
            Find numerical approximations $\tilde{h}_k$ and $\tilde{h}_{k+1}$ of $h_k$ and $h_{k+1}$, respectively, where $h_1 \leqslant h_2 \leqslant \dots \leqslant h_{i-1}$ are all roots of $\Disc_x(P(x)+C)$ as a univariate polynomial in $C$\; \label{numsolve}

            $\points \gets \hyperref[alg:endpoints]{\mathsf{EndPoints}}(P(x), \tilde{h}_k, \tilde{h}_{k+1})$\;\label{callendp}

            \eIf{$\points \neq \varnothing$}{$\lceil h_k \rceil \gets \min(\points)$, and $\lfloor h_{k+1} \rfloor \gets \max(\points)$\;}{continue\;}
            
            \lFor{$\displaystyle l = (n-i)! \left\lceil \frac{\left\lceil h_k \right\rceil}{(n-i)!} \right\rceil$ \KwTo $\lfloor h_{k+1} \rfloor$ \KwBy $(n-i)!$}
            {append $P(x)+l$ to \temp} \label{rangeC}
            }
        $\out \gets \temp$, and $\temp \gets \varnothing$\;
        }
    }
    {\Return{\out}\;\label{arrend}}
    
    \Return{\out}\;   
\end{algorithm}

\section{Enumerating Seidel-feasible polynomials} \label{sec:Seidelcharalgo}

In this section, we describe algorithms that generate all \emph{Seidel-feasible polynomials}.
We will first define Seidel-feasible polynomials of even degree in Section~\ref{subsec:evendeg} and then define Seidel-feasible polynomials of odd degree separately in Section~\ref{subsec:oddeg}.

Let $M$ be a real symmetric matrix and we define the \textbf{characteristic polynomial} of $M$ as $\Char_M(x) \coloneqq \det(xI-M)$.
Let $S$ be a Seidel matrix of order $n \geqslant 1$.
Let $a_0=1$ and let $a_1, a_2, \dots, a_n$ be integers such that $\displaystyle \Char_S(x) = \sum_{i=0}^n a_i x^{n-i}$.
Since $\tr S = 0$ and $\tr S^2 = n(n-1)$, we obtain $a_1 = 0$ and $\displaystyle a_2 = -\binom{n}{2}$.
The coefficients of $\Char_S(x)$ also satisfy several divisibility and number-theoretic constraints \cite{GSY21, GY19}.
Adopting the terminology from \cite{GSY21}, we use the next definition to convey an important part of these constraints.

\begin{dfn} \label{type2}
Let $p(x) \in \mathbb{Z}[x]$ be a monic polynomial of degree $n \geqslant 0$.
Let $b_0 = 1$ and let $b_1, b_2, \dots, b_n$ be integers such that $\displaystyle p(x) = \sum_{i=0}^n b_i x^{n-i}$.
We call $p(x)$ a \emph{\textbf{type~2}} polynomial if $2^i$ divides $b_i$ for all $i \in \{1,2,\dots,n\}$ and a \emph{\textbf{weakly type~2}} polynomial if $2^{i-1}$ divides $b_i$ for all $i \in \{1,2,\dots,n\}$.
\end{dfn}

Clearly, any type~2 polynomial is also weakly type~2.
The following lemma is directly obtained from \cite[Lemma 2.7]{GSY21}.

\begin{lem} \label{lem:CharStype}
Let $S$ be a Seidel matrix of order $n \geqslant 1$.
Then $\Char_S(x-1)$ is weakly type~2.
Furthermore, if $n$ is even then $\Char_S(x-1)$ is type~2.
\end{lem}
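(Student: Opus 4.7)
The plan is to substitute inside the determinant: $\Char_S(x-1) = \det\bigl((x-1)I - S\bigr) = \det(xI - T)$, where $T \coloneqq S + I$ is a symmetric matrix all of whose entries lie in $\{\pm 1\}$. Writing $\Char_T(x) = \sum_{i=0}^n b_i x^{n-i}$, the standard identification of the coefficients of a characteristic polynomial with signed sums of principal minors gives
\[
b_i \;=\; (-1)^i \sum_{\substack{I \subseteq \{1,\dots,n\} \\ |I| = i}} \det T_I,
\]
where $T_I$ denotes the principal submatrix of $T$ indexed by $I$. Each $T_I$ is itself a symmetric $\{\pm 1\}$-matrix of order $i$, so the problem reduces to understanding the $2$-adic divisibility of principal minors of a symmetric sign matrix.

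For the weakly type~2 claim, I would invoke the classical fact that every $i \times i$ matrix with odd entries has determinant divisible by $2^{i-1}$: subtracting row $1$ from each of the other $i-1$ rows produces a matrix whose final $i-1$ rows have only even entries (odd minus odd is even), and factoring a $2$ from each of those rows exhibits $\det T_I$ as $2^{i-1}$ times an integer. Summing this divisibility over all size-$i$ index sets $I$ then yields $2^{i-1} \mid b_i$ for every $i \in \{1,\dots,n\}$, which is exactly the weakly type~2 condition.

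The main obstacle is the extra factor of $2$ when $n$ is even. The bound $2^{i-1} \mid \det T_I$ is sharp on individual minors --- a Hadamard-type symmetric $\pm 1$ matrix realises equality --- so type~2 cannot arise from a term-by-term strengthening; it must instead come from cancellation in $\sum_{|I|=i} \det T_I$ modulo $2^i$. For this final step I would appeal directly to \cite[Lemma 2.7]{GSY21}, whose proof runs a parity argument that is only available when $n$ is even, showing that the integers $d_I \coloneqq 2^{1-i}\det T_I$ satisfy $\sum_{|I|=i} d_I \equiv 0 \pmod{2}$. Combining this extra factor of $2$ with the weakly type~2 estimate gives $2^i \mid b_i$ for all $i$, completing the type~2 conclusion in the even case.
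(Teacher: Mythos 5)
The paper does not actually give a proof of this lemma: it simply records that the result is ``directly obtained from \cite[Lemma 2.7]{GSY21}'', which (after the substitution you identify, $\Char_S(x-1)=\Char_{S+I}(x)$ with $S+I$ a symmetric $\{\pm1\}$-matrix) is exactly this statement. Your proposal is therefore more explicit than the paper's. The weakly type~2 half of your argument is correct and self-contained: writing $b_i = (-1)^i\sum_{|I|=i}\det T_I$ as a signed sum of principal $i\times i$ minors of $T=S+I$, and observing that any matrix with all odd entries has determinant divisible by $2^{i-1}$ (subtract row $1$ from rows $2,\dots,i$ and extract a factor of $2$ from each of those $i-1$ rows), immediately gives $2^{i-1}\mid b_i$. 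You also correctly diagnose that the extra factor of $2$ for even $n$ cannot be obtained minor-by-minor since $2^{i-1}$ is sharp on individual minors, and must instead come from cancellation across the sum. At that point, however, you fall back on the same citation that the paper relies on, so the even-$n$ type~2 conclusion is not made self-contained by your proposal either. In short, you buy an elementary, explicit proof of the weak half, but both your write-up and the paper's ultimately outsource the hard half to \cite[Lemma 2.7]{GSY21}; a fully independent proof would require reproducing that lemma's parity argument rather than citing it.
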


Let $n \geqslant 1$ be an integer.
Let $p(x)$ be an integer polynomial of degree $n$ and let $a_0, a_1, \dots, a_n$ be integers such that $\displaystyle p(x) = \sum_{i=0}^n a_i x^{n-i}$.
We call $p(x)$ a \textbf{Seidel trace polynomial} if $p(x)$ is real-rooted, $a_0 = 1$, $a_1 = 0$, and $\displaystyle a_2 = -\binom{n}{2}$ if $n \geqslant 2$.
Let $\mathcal{T}_n$ denote the set of all Seidel trace polynomials of degree $n$.
For example, we have $\mathcal{T}_1 = \{x\}$ and $\mathcal{T}_2 = \{(x+1)(x-1)\}$.
We can use \hyperref[alg:numrealrooted]{$\mathsf{AllRealRooted}$} to enumerate $\mathcal{T}_n$ for small $n$.
Suppose that $p(x) \in \mathbb{R}[x]$ is a non-constant polynomial.
We call $p(x)$ \textbf{Seidel-realisable} if there exists a Seidel matrix $S$ such that $\Char_S(x) = p(x)$.
Let $\mathcal{R}_n$ denote the set of all Seidel-realisable polynomials of degree $n$.
It follows that $\mathcal{R}_n \subseteq \mathcal{T}_n$.
In particular, we have $\mathcal{R}_1 = \mathcal{T}_1$ and $\mathcal{R}_2 = \mathcal{T}_2$.
Let $p(x)$ be a Seidel-realisable polynomial of degree $n \geqslant 1$.
Applying Lemma~\ref{lem:CharStype}, we have that $p(x-1)$ is weakly type~2, and type~2 if $n$ is even.
Suppose that we have a monic real-rooted polynomial $q(x) \in \mathbb{Z}[x]$ that is also a divisor of $p(x)$.
As an example, we can use \cite[Lemma 3.1]{GSY21} to find such $q(x)$.
Then, we can write $p(x) = q(x) \cdot f(x)$ for some monic real-rooted polynomial $f(x) \in \mathbb{Z}[x]$.
Lemma~\ref{lem:typefactor} below implies that both $q(x-1)$ and $f(x-1)$ must be weakly type~2.
Moreover, if $n$ is even then $q(x-1)$ and $f(x-1)$ are both type~2.

\begin{lem}[{\cite[Lemma 2.8]{GSY21}}] \label{lem:typefactor}
    Let $p(x) \in \mathbb{Z}[x]$ be a monic polynomial.
    Suppose that $p(x)$ is equal to $q(x) \cdot r(x)$ for some monic integer polynomials $q(x)$ and $r(x)$.
    Then
    \begin{enumerate}[label=(\roman*)]
        \item $p(x)$ is type~2 if and only if $q(x)$ and $r(x)$ are both type~2.
        \item $p(x)$ is weakly type~2 if and only if $q(x)$ and $r(x)$ are both weakly type~2 and at least one of them is type~2.
    \end{enumerate}
\end{lem}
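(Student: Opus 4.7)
The plan is to reduce both parts of the lemma to a single bookkeeping of $2$-adic valuations. Let $v_2$ denote the $2$-adic valuation on $\mathbb{Q}$, and for a nonzero $f = \sum a_i x^{\deg f - i} \in \mathbb{Q}[x]$ define the \emph{$2$-adic content} $\mu(f) \coloneqq \min_i v_2(a_i)$. Introduce the substitution $\phi(f)(x) \coloneqq f(2x)/2^{\deg f}$; a direct expansion shows that the coefficient of $x^{\deg f - i}$ in $\phi(f)$ is $a_i/2^i$, and $\phi$ sends monic polynomials in $\mathbb{Q}[x]$ to monic polynomials in $\mathbb{Q}[x]$. Consequently, for a monic integer polynomial $f$, $f$ is \textbf{type~2} precisely when $\mu(\phi(f)) = 0$, and $f$ is \textbf{weakly type~2} precisely when $\mu(\phi(f)) \geqslant -1$ (noting that $\mu(\phi(f)) \leqslant 0$ is automatic since $\phi(f)$ is monic). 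Because $\deg(qr) = \deg q + \deg r$, the substitution is multiplicative: $\phi(p) = \phi(q)\phi(r)$.

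The only analytic input is Gauss's lemma in its $2$-adic form: for nonzero $f, g \in \mathbb{Q}[x]$, one has $\mu(fg) = \mu(f) + \mu(g)$. This follows from the classical primitive-polynomial Gauss lemma applied to the primitive polynomials $2^{-\mu(f)} f$ and $2^{-\mu(g)} g$ over $\mathbb{Z}_{(2)}$. Setting $\gamma \coloneqq -\mu(\phi(p))$, $\alpha \coloneqq -\mu(\phi(q))$, and $\beta \coloneqq -\mu(\phi(r))$, all three quantities are nonnegative integers (by monicity of $\phi(p), \phi(q), \phi(r)$), and Gauss's lemma yields the single identity $\gamma = \alpha + \beta$.

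Part~(i) then reduces to $\gamma = 0 \iff \alpha = \beta = 0$, which is immediate from nonnegativity. Part~(ii) reduces to $\gamma \leqslant 1 \iff \alpha + \beta \leqslant 1$, and combined with $\alpha, \beta \geqslant 0$ this is the same as saying $\alpha, \beta \leqslant 1$ and at least one of $\alpha, \beta$ equals $0$, which translates back as ``$q$ and $r$ are both weakly type~2 and at least one of them is type~2.'' The only nontrivial step is the $2$-adic Gauss lemma; once that is in hand, both parts of the lemma collapse to elementary arithmetic in $\mathbb{Z}_{\geqslant 0}$, so I expect the main obstacle to be no more than carefully verifying the reformulation of the two type conditions in terms of $\mu(\phi(\cdot))$ and resisting the temptation to use the non-multiplicative operator $f \mapsto 2\phi(f)$ in place of $\phi$.
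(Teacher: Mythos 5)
The paper does not prove this lemma; it is cited from \cite[Lemma~2.8]{GSY21}. So your argument has to stand on its own, and it does: it is correct and complete. The key observation---that the map $\phi(f)(x)=f(2x)/2^{\deg f}$ sends the coefficient $a_i$ of $x^{\deg f-i}$ to $a_i/2^i$, is multiplicative, and preserves monicity---cleanly recasts ``type~2'' as $\mu(\phi(f))=0$ and ``weakly type~2'' as $\mu(\phi(f))\geqslant -1$. Gauss's lemma over $\mathbb{Z}_{(2)}$ then turns both statements into the single additive identity $\gamma=\alpha+\beta$ among nonnegative integers, from which (i) is the implication $\gamma=0\iff\alpha=\beta=0$ and (ii) is the implication $\gamma\leqslant 1\iff\alpha+\beta\leqslant 1$, which you correctly decode back into the stated combination ``both weakly type~2, at least one type~2.'' Each of the bridging equivalences checks out (including the nonobvious one: $\alpha+\beta\leqslant 1$ with $\alpha,\beta\in\mathbb{Z}_{\geqslant 0}$ iff both are $\leqslant 1$ and at least one is $0$), and the monicity of $\phi(p),\phi(q),\phi(r)$ is exactly what guarantees the contents are $\leqslant 0$, hence that $\alpha,\beta,\gamma$ are nonnegative. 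Compared with the standard approach of unwinding the convolution $b_k=\sum_{i+j=k}c_i d_j$ and inducting on $k$, your reduction to content multiplicativity is shorter, avoids casework on parities of $i$ and $j$, and makes the asymmetric ``at least one is type~2'' clause in (ii) transparent rather than something to be discovered by inspection.
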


We will next implement algorithms to enumerate \emph{Seidel-feasible polynomials}.
The definition is motivated by key necessary conditions that are satisfied by any Seidel-realisable polynomial.
Conversely, we must have that any Seidel-realisable polynomial is Seidel-feasible.
We provide separate definitions for even and odd degrees, as the odd case requires a different approach.

\subsection{Seidel-feasible polynomials of even degree} 
\label{subsec:evendeg}

Fix a positive even integer $n$.
Let $p(x)$ be an integer polynomial of degree $n$.
We call $p(x)$ a \textbf{Seidel-feasible polynomial} if $p(x) \in \mathcal{T}_n$ and $p(x-1)$ is type~2.
We modify \hyperref[alg:numrealrooted]{$\mathsf{AllRealRooted}$} to implement \hyperref[alg:Seidelcharalgoeven]{$\mathsf{FeasibleEven}$} that enumerates Seidel-feasible polynomials of even degree.
The inputs of \hyperref[alg:Seidelcharalgoeven]{$\mathsf{FeasibleEven}$} are an even integer $n \geqslant 2$ together with a monic polynomial $q(x) \in \mathbb{Z}[x]$ of degree $s$ where $0 \leqslant s \leqslant n-2$.
The output will be all Seidel-feasible polynomials of degree $n$ that are divisible by $q(x)$.
Hence, we want to exhaustively find every integer polynomial $f(x)$ of degree $t = n-s$ such that $q(x) \cdot f(x)$ is a Seidel-feasible polynomial.
Note that, by definition, $q(x-1) \cdot f(x-1)$ is type~2.
Since $n$ is even, by Lemma~\ref{lem:typefactor}, we have that $q(x-1)$ and $f(x-1)$ are both type~2.
Hence, in Line~\ref{checkqx} we first check that if $q(x)$ is not real-rooted or $q(x-1)$ is not type~2, then the algorithm returns an empty array.
Let $d_0, d_1, \dots, d_s$ be integers such that $\displaystyle q(x) = \sum_{i=0}^s d_i x^{s-i}$ and let $c_0, c_1, \dots, c_t$ be integers such that $\displaystyle f(x) = \sum_{i=0}^t c_i x^{t-i}$.
It follows that $c_0 = d_0 =1$, $c_1 = -d_1$, and $\displaystyle c_2 = {d_1}^2-d_2-\binom{n}{2}$, where we set $d_1=0$ if $s=0$, and $d_2=0$ if $s \leqslant 1$.
Since $f(x-1)$ is type~2, \hyperref[alg:Seidelcharalgoeven]{$\mathsf{FeasibleEven}$} will indirectly enumerate all possible $f(x)$ by enumerating all possible $f(x-1)$ instead.
In Line~\ref{p2fhat} we let $p_2(x)$ be the $(t-2)$th derivative of $\hat{f}(x-1)$, where
\[
\hat{f}(x) = x^t - d_1 x^{t-1} + \left[ {d_1}^2-d_2-\binom{n}{2} \right] x^{t-2}.
\]
Line~\ref{startsimilar} to Line~\ref{endsimilar} are based on the steps in Line~\ref{arrstart} to Line~\ref{arrend} of \hyperref[alg:numrealrooted]{$\mathsf{AllRealRooted}$}.
Due to the type~2 condition, we have the factor $2^i$ in Line~\ref{pow2range}.
In Line~\ref{shiftback} and Line~\ref{timesqx}, we substitute $x$ with $x+1$ for each polynomial that we have obtained and then multiply it by $q(x)$.

\begin{algorithm}[htbp]
\capstart
\caption{$\mathsf{FeasibleEven}(n, q(x))$}
\label{alg:Seidelcharalgoeven}
\SetKwData{out}{out}
\SetKwData{temp}{temp}
\SetKwData{points}{points}
\SetKwInOut{Input}{Input}
\SetKwInOut{Output}{Output}
\SetKw{KwBy}{by}
\Input{An even integer $n \geqslant 2$, and a monic polynomial $\displaystyle q(x) = \sum_{i=0}^s d_i x^{s-i} \in \mathbb{Z}[x]$ where $s \in \{0,1,\dots,n-2\}$.}
\Output{All Seidel-feasible polynomials of degree $n$ that are divisible by $q(x)$.}

    $t \gets n-s$, and initialise each of \temp and \out as an empty array\;
    \lIf{\hyperref[alg:realrootedcheck]{$\mathsf{IsRealRooted}$}$(q(x)) = \False$ {\bf or} $q(x-1)$ is not type~2}{\Return{\out}}\label{checkqx}
    
    $\displaystyle \hat{f}(x) \gets x^t - d_1 x^{t-1} + \left[ {d_1}^2-d_2-\frac{n(n-1)}{2} \right] x^{t-2}$, where we set $d_1=0$ if $s=0$, and $d_2=0$ if $s \leqslant 1$\;
    $p_2(x) \gets$ $(t-2)$th derivative of $\hat{f}(x-1)$\;\label{p2fhat}
    \eIf{\hyperref[alg:realrootedcheck]{$\mathsf{IsRealRooted}$}$(p_2(x)) = \True$\label{startsimilar}}
    {
        Append $p_2(x)$ to \out\;
        \For{$i = 3$ \KwTo $t$}
        {
            \For{$j = 1$ \KwTo $\# \out$}
            {
                $\displaystyle k \gets \left\lfloor \frac{i-1}{2} \right\rfloor$, $p(x) \gets$ the $j$th entry of \out, and $\displaystyle P(x) \gets \int_0^x p(y) \dd{y}$\;
                
                Find numerical approximations $\tilde{h}_k$ and $\tilde{h}_{k+1}$ of $h_k$ and $h_{k+1}$, respectively, where $h_1 \leqslant h_2 \leqslant \dots \leqslant h_{i-1}$ are all roots of $\Disc_x(P(x)+C)$ as a univariate polynomial in $C$\;

                $\points \gets \hyperref[alg:endpoints]{\mathsf{EndPoints}}(P(x), \tilde{h}_k, \tilde{h}_{k+1})$\;

                \eIf{$\points \neq \varnothing$}{$\lceil h_k \rceil \gets \min(\points)$, and $\lfloor h_{k+1} \rfloor \gets \max(\points)$\;}{continue\;}
                
                \lFor{$\displaystyle l = 2^i (t-i)! \left\lceil \frac{\lceil h_k \rceil}{2^i (t-i)!} \right\rceil$ \KwTo $\lfloor h_{k+1} \rfloor$ \KwBy $2^i (t-i)!$}
                {append $P(x)+l$ to \temp}\label{pow2range}
            }
            $\out \gets \temp$, and $\temp \gets \varnothing$\;
        }
    }
    {\Return{\out}\;\label{endsimilar}}
    
    Update \out by substituting $x$ with $x+1$ for each polynomial in \out\;\label{shiftback}

    Multiply each polynomial in \out by $q(x)$\;\label{timesqx}
    
    \Return{\out}\;

\end{algorithm}

\subsection{Seidel-feasible polynomials of odd degree}
\label{subsec:oddeg}

First, we recall some properties of the coefficients of Seidel-realisable polynomials of odd degree that have been derived in \cite{GY19}.

Let $n \geqslant 1$ be an integer.
Let $I_n$ be the identity matrix of order $n$ and let $J_n$ be the all-ones matrix of order $n$.
If the order of the matrix is clear from the context, we simply write $I$ and $J$ instead.
Let $S$ be a Seidel matrix of order $n$.
Then there exists a graph with adjacency matrix $A$ such that $S = J-I-2A$.
Clearly, we also have that $\Char_{J-2A}(x) = \Char_{S}(x-1)$.

\begin{lem}[{\cite[Lemma 3.8]{GY19}}] \label{lem:evencoeff}
    Let $A$ be an adjacency matrix of a graph of order $n \geqslant 1$.
    Let $b_0, b_1, \dots, b_n$ be integers such that $\displaystyle \Char_{J-2A}(x) = \sum_{i=0}^n b_i x^{n-i}$.
    Then $2^i$ divides $b_i$ for all even $i$ in $\{4, 5, \dots, n\}$.
\end{lem}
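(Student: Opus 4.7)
The plan is to reduce the lemma to showing that each $i \times i$ principal minor of $M \coloneqq J - 2A$ is divisible by $2^i$. Expanding $\det(xI - M) = \sum_{i=0}^n (-1)^i e_i(M) x^{n-i}$, where $e_i(M)$ denotes the sum of $i \times i$ principal minors of $M$, identifies $b_i$ with $\pm e_i(M)$. So the divisibility for $b_i$ follows from the pointwise divisibility $2^i \mid \det(B)$ for every $i \times i$ principal submatrix $B$ of $M$.

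Fix such a submatrix $B = J_i - 2A'$, where $A'$ is the adjacency matrix of the induced subgraph on the chosen $i$ vertices. The idea is to extract $2^{i-1}$ from $\det(B)$ via elementary operations and then to harvest one further factor of $2$ from a modulo-$2$ argument. Subtracting row $1$ from each of rows $2, \dots, i$ and column $1$ from each of columns $2, \dots, i$ (operations that preserve the determinant) transforms $B$ into the block form
\[
\begin{pmatrix} 1 & \mathbf{u}^{\transpose} \\ \mathbf{u} & C \end{pmatrix},
\]
where $\mathbf{u}$ is an integer vector with entries in $2\mathbb{Z}$ and $C$ is a symmetric $(i-1) \times (i-1)$ integer matrix with entries in $2\mathbb{Z}$, both computed directly from the entries of $A'$. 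Applying the Schur complement identity gives $\det(B) = \det(C - \mathbf{u}\mathbf{u}^{\transpose})$; since $\mathbf{u}\mathbf{u}^{\transpose}$ has entries in $4\mathbb{Z}$, every entry of $C - \mathbf{u}\mathbf{u}^{\transpose}$ is still even, and pulling out one factor of $2$ from each of its $i-1$ rows yields $\det(B) = 2^{i-1}\det(M')$ for some integer matrix $M'$.

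The key step is to show that $\det(M') \equiv 0 \pmod 2$. A direct computation of $M'_{jk}$ in terms of the entries of $A'$ gives $M'_{jk} \equiv a_{j1} + a_{1k} + a_{jk} \pmod 2$, which is symmetric in $j, k$ by symmetry of $A'$ and vanishes when $j = k$ because $a_{jj} = 0$. Hence $M' \pmod 2$ is an alternating matrix over $\mathbb{F}_2$ of size $i - 1$. By the standard normal form for alternating bilinear forms, valid over any field, every alternating matrix has even rank; since $i$ is even, $i - 1$ is odd, forcing $M' \pmod 2$ to be singular. This supplies the missing factor of $2$, giving $2^i \mid \det(B)$, and summing over all $i \times i$ principal submatrices yields $2^i \mid b_i$. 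The main obstacle is setting up the row and column reductions symmetrically enough that the reduced matrix $M'$ inherits the alternating structure modulo $2$; once that symmetry is secured, the parity argument via odd-dimensional alternating matrices is immediate.
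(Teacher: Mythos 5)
The paper does not prove this lemma; it cites it directly from \cite[Lemma 3.8]{GY19}. So there is no in-paper proof to compare against, and I will only evaluate your argument on its own terms.

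Your proof is correct, and it is a clean, self-contained linear-algebra argument. The reduction from $b_i = (-1)^i e_i(J-2A)$ to the divisibility of each $i\times i$ principal minor of $J-2A$ is standard and valid. The row/column elimination is sound: after $R_j \gets R_j - R_1$ and $C_k \gets C_k - C_1$ for $j,k\geqslant 2$, the $(1,1)$-entry is $1$, the border has entries $-2a_{1k}$, and the $(j,k)$-block has entries $2(a_{1k}+a_{j1}-a_{jk})$, all preserving symmetry; these are indeed all in $2\mathbb{Z}$. The Schur complement step $\det(B)=\det\bigl(C-\mathbf{u}\mathbf{u}^{\intercal}\bigr)$ uses $B_{11}=1$, and since $\mathbf{u}\mathbf{u}^{\intercal}\in 4\mathbb{Z}^{(i-1)\times(i-1)}$ the resulting matrix is still entrywise even, giving the factor $2^{i-1}$. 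The computation $M'_{jk}=(a_{1k}+a_{j1}-a_{jk})-2a_{j1}a_{1k}\equiv a_{1k}+a_{j1}+a_{jk}\pmod 2$ is correct, and the diagonal vanishes mod $2$ because $a_{jj}=0$; so $M' \bmod 2$ is genuinely alternating over $\mathbb{F}_2$ (not merely symmetric, which would not suffice in characteristic $2$). The symplectic normal form gives even rank over any field, and for $i$ even, $i-1$ is odd, forcing $\det(M')\equiv 0\pmod 2$. This supplies the final factor of $2$, so $2^i\mid\det(B)$ and hence $2^i\mid b_i$. As a side remark, your argument never uses $i\geqslant 4$: it applies verbatim to $i=2$, where the conclusion is still true (in fact $b_2=0$).
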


If $n$ is a positive integer, let $\varphi(n)$ denote the Euler's totient function of $n$.
A graph $\Gamma$ is an \textbf{Euler graph} if each vertex of $\Gamma$ has even degree.
Here, we assign the value $1$ to $0^0$.

\begin{lem}[cf.~{\cite[Lemma 3.12]{GY19}}] \label{lem:modd}
    Let $\Gamma$ be an Euler graph of order $n$ odd, and let $A$ be its adjacency matrix.
    Let $b_0, b_1, \dots, b_n$ be integers such that $\displaystyle \Char_{J-2A}(x) = \sum_{i=0}^n b_i x^{n-i}$.
    Then, for all odd $i$ in $\{ 5, 6, \dots, n \}$, we have
    \begin{align} \label{eq:modd}
    b_i \equiv \sum_{d \, | \, i-1} \; \sum_{\substack{m_1+2m_2+\dots+dm_d=d \mathstrut \\ m_1 \geqslant 0, \dots, m_d \geqslant 0 \mathstrut \\ m_{i-1}=0}} C_d(m_1,\dots,m_d) \cdot P_d(m_1,\dots,m_d) \mod 2^i
    \end{align}
    where
    \begin{align*}
        C_d(m_1,\dots,m_d) &\coloneqq 2^{i-1} \cdot \frac{d \cdot \varphi\left( \frac{i-1}{d} \right)}{i-1} \cdot \frac{(m_1+m_2+\dots+m_d-1)!}{m_1! m_2! \cdots m_d!},\; \text{and} \\
        P_d(m_1,\dots,m_d) &\coloneqq \prod_{j=1}^{\left\lfloor \frac{d}{2} \right\rfloor} \left( \frac{b_{2j+1}}{2^{2j} \cdot n} \right)^{m_{2j}} \cdot \prod_{j=1}^{\left\lfloor \frac{d+1}{2} \right\rfloor} \left( \frac{b_{2j+1}+b_{2j}+b_{2j-1} \cdot b_3}{2^{2j-1}} \right)^{m_{2j-1}}.
    \end{align*}
\end{lem}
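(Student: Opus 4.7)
The plan is to derive the congruence in two stages: first, translate the claim into a statement about power sums of the eigenvalues of $M \coloneqq J - 2A$, and then invert Newton's identities to obtain the stated formula on the coefficients $b_i$.

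First I would set $p_k \coloneqq \tr(M^k) = \sum_j \lambda_j^k$, where $\lambda_1, \dots, \lambda_n$ are the eigenvalues of $M$. Expanding $(J-2A)^k$ as a noncommutative sum over $\{J,-2A\}$-words and using the standard identities $J^2 = nJ$, $J A^r J = (\mathbf{1}^\transpose A^r \mathbf{1}) J$, and $A\mathbf{1} = \mathbf{d}$, I would show that each $p_k$ splits as $\tr((-2A)^k) + \Delta_k$, where $\Delta_k$ is a sum of products of terms of the form $\mathbf{1}^\transpose A^r \mathbf{1} = \sum_i (\mathbf{d})_i \cdot (A^{r-1} \mathbf{1})_i$. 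Because $\Gamma$ is Euler, every entry of $A^r \mathbf{1}$ is even, and a careful count shows that $\Delta_k$ acquires an extra factor of $2$ over what the naive bound gives. Combined with $\tr((-2A)^k) = (-2)^k \tr(A^k)$, this yields the 2-adic estimates $2^{k-1} \mid p_k$ for all $k \geqslant 1$ together with the sharper congruences on $p_k$ modulo $2^i$ needed to feed into the next step.

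Next I would invoke the Newton--Girard formula
\[
-i \cdot b_i = p_i + \sum_{j=1}^{i-1} b_j \, p_{i-j},
\]
and iterate it to express $b_i$ as a polynomial in $p_1, \dots, p_i$ with the explicit Waring coefficients $\frac{(-1)^{|m|}}{m_1!\cdots m_i!}\prod_j (p_j/j)^{m_j}$. The key point is that for odd $i$ the term $p_i/i$ is present with denominator coprime to $2$, so the 2-adic content is controlled entirely by the $p_j$ with $j < i$. Substituting the expressions for $p_j$ in terms of the $b_\ell$ (with $\ell < i$) from the previous step and collecting contributions modulo $2^i$ should produce a sum indexed by partitions of $i-1$ with parts $\leqslant i-2$, exactly matching the shape of the right-hand side of \eqref{eq:modd}.

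The main obstacle, and the step I expect to require the most care, is showing that this double substitution collapses to a sum indexed only by the divisors $d$ of $i-1$, with the arithmetic weight $\frac{d \cdot \varphi((i-1)/d)}{i-1}$. This is the combinatorial fingerprint of a cyclic symmetry: closed walks of length $i-1$ on a graph come in orbits under cyclic rotation, and the standard necklace count gives precisely $\sum_{d \mid i-1} d \cdot \varphi((i-1)/d)$ as the relevant weight. I would therefore reorganise the Waring expansion by grouping compositions $(m_1, \dots, m_{i-2})$ of $i-1$ according to the greatest common divisor of the indices with $m_j \neq 0$, and apply Möbius inversion / a $\varphi$-identity to get the stated form. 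Verifying that the terms with $m_{i-1} \neq 0$ can be dropped modulo $2^i$ (which justifies the constraint $m_{i-1} = 0$ in the sum) and that the factor $P_d$ with the peculiar normalisations $b_{2j+1}/(2^{2j} n)$ and $(b_{2j+1}+b_{2j}+b_{2j-1}b_3)/2^{2j-1}$ emerges correctly is a bookkeeping task best handled by induction on $i$, using the already established congruences on smaller $b_\ell$ to justify the divisibility assumed inside $P_d$.
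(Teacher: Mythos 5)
The paper does not prove this lemma at all: the ``cf.~\cite[Lemma 3.12]{GY19}'' in the header means it is imported from the cited reference and used as a black box, so there is no proof in this paper for your sketch to be compared against. I can only assess the sketch on its own merits.

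Your general plan (pass to the power sums $p_k = \tr\bigl((J-2A)^k\bigr)$, establish $2$-adic divisibility of the $p_k$, then invert Newton's identities and reorganise the Waring expansion into a divisor sum via a $\varphi$-identity) is a reasonable guess at how such a formula would be derived, and the $\frac{d\,\varphi((i-1)/d)}{i-1}\cdot\frac{(|m|-1)!}{m_1!\cdots m_d!}$ shape is indeed the fingerprint of a necklace-type rearrangement. However, the concrete $2$-adic estimate you assert, namely $2^{k-1}\mid p_k$ for all $k\geqslant 1$, is false already at $k=2$. The matrix $M=J-2A$ has every entry equal to $\pm 1$ (diagonal entries $1$, off-diagonal entries $1-2A_{ij}\in\{\pm 1\}$), so
\[
p_2=\tr(M^2)=\sum_{i,j}M_{ij}^2=n^2,
\]
which is odd because $n$ is odd; hence $2\nmid p_2$. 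More generally the pure-$J$ contribution $\tr(J^k)=n^k$ is always odd, so the claimed divisibility of $p_k$ by $2^{k-1}$ cannot hold as stated. Any correct version of this step would have to subtract off the $J$-part (e.g.\ work with $p_k-n^k$ or with $\tr(M^k)-\tr(J^k)$ and keep careful track of how the Euler hypothesis forces extra powers of $2$ in the mixed $J,A$ words), and until that is pinned down the subsequent Newton-inversion step is resting on a false foundation. Beyond that, the passage from ``compositions of $i-1$ grouped by gcd'' to the stated inner sum over partitions of each divisor $d$, with the specific normalisations $b_{2j+1}/(2^{2j}n)$ and $(b_{2j+1}+b_{2j}+b_{2j-1}b_3)/2^{2j-1}$ inside $P_d$, is acknowledged by you to be unverified bookkeeping; as it stands the sketch does not establish the lemma.
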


Let $n \geqslant 1$ be an odd integer and let $p(x)$ be an integer polynomial of degree $n$.
We call $p(x)$ a \textbf{Seidel-feasible polynomial} if $p(x) \in \mathcal{T}_n$, $p(x-1)$ is weakly type~2, and the next condition holds:
\begin{enumerate}[label=(\roman*), ref=\roman*]
    \item \label{i} If $b_0, b_1, \dots, b_n$ are integers such that $\displaystyle p(x-1) = \sum_{i=0}^n b_i x^{n-i}$, then $2^i$ divides $b_i$ for all even $i$ in $\{4, 5, \dots, n\}$, and $b_i$ satisfies \eqref{eq:modd} for all odd $i$ in $\{ 5, 6, \dots, n\}$.
\end{enumerate}
Condition~\eqref{i} is motivated by both Lemma~\ref{lem:evencoeff} and Lemma~\ref{lem:modd}.
We implement \hyperref[alg:checkmod]{$\mathsf{ModCheck}$} specifically for checking Condition~\eqref{i}.
Let $\bar{p}(x)$ be an integer polynomial of odd degree $n \geqslant 5$.
Let $b_0, b_1, \dots, b_n$ be integers such that $\displaystyle \bar{p}(x) = \sum_{k=0}^n b_k x^{n-k}$.
The algorithm \hyperref[alg:checkmod]{$\mathsf{ModCheck}$} takes as inputs $\bar{p}(x)$ together with an integer $i$ where $4 \leqslant i \leqslant n$.
It returns \texttt{True} if $2^i$ divides $b_i$ whenever $i$ is even, or if $b_i$ satisfies \eqref{eq:modd} whenever $i$ is odd.
Otherwise, \hyperref[alg:checkmod]{$\mathsf{ModCheck}$} will return \texttt{False}.

\begin{algorithm}[htbp]
\capstart
\caption{$\mathsf{ModCheck}(\bar{p}(x), i)$}
\label{alg:checkmod}
\SetKwData{out}{out}
\SetKwData{div}{div}
\SetKwData{total}{total}
\SetKwInOut{Input}{Input}
\SetKwInOut{Output}{Output}
\Input{An integer polynomial $\bar{p}(x) = \displaystyle \sum_{k=0}^n b_k x^{n-k}$ of odd degree $n \geqslant 5$, and an integer $i$ where $4 \leqslant i \leqslant n$.}
\Output{\texttt{True} if $2^i$ divides $b_i$ whenever $i$ is even, or if $b_i$ satisfies \eqref{eq:modd} whenever $i$ is odd.
Otherwise, the algorithm will return \False.}

$\out \gets \False$\;
$\total \gets 0$\;
\eIf{$i \equiv 0 \mod 2$}
    {
        \lIf{$b_i \equiv 0 \mod 2^i$}{$\out \gets \True$}
    }
    {
        Initialise \div as an array consisting of all positive divisors $d$ of $i-1$\;
        \For{each $d$ in \div}
        {
            Find all partitions of $d$\;
            \For{each partition of $d$}
            {
                Determine the corresponding $m_1, m_2, \dots, m_d$\;
                \lIf{$d = i-1$}{$m_d = 0$}
                Compute $C_d(m_1, \dots, m_d)$ and $P_d(m_1, \dots, m_d)$\;
                $\total \gets \total + C_d(m_1, \dots, m_d) \cdot P_d(m_1, \dots, m_d)$\;
            }
        }
        \lIf{$b_i \equiv \total \mod 2^i$}{$\out \gets \True$}
    }

\Return{\out}\;

\end{algorithm}

For computational purposes, we introduce the next definition.
Let $n \geqslant 5$ be an odd integer.
Let $p(x)$ be an integer polynomial of degree $n$ and let $\delta$ be an integer such that $4 \leqslant \delta \leqslant n$.
We call $p(x)$ a \textbf{$\delta$-partial Seidel-feasible polynomial} if $p(x) \in \mathcal{T}_n$, $p(x-1)$ is weakly type~2, and the following condition holds:
\begin{enumerate}[resume*]
    \item \label{ii} If $b_0, b_1, \dots, b_n$ are integers such that $\displaystyle p(x-1) = \sum_{i=0}^n b_i x^{n-i}$, then $2^i$ divides $b_i$ for all even $i$ in $\{4, 5, \dots, \delta\}$, and $b_i$ satisfies \eqref{eq:modd} for all odd $i$ in $\{ 5, 6, \dots, \delta\}$.
\end{enumerate}

We can now explain the steps in \hyperref[alg:Seidelcharalgodd]{$\mathsf{FeasiblePartial}$}.
The inputs of \hyperref[alg:Seidelcharalgodd]{$\mathsf{FeasiblePartial}$} are an odd integer $n \geqslant 5$, a monic polynomial $q(x) \in \mathbb{Z}[x]$ of degree $s$ where $0 \leqslant s \leqslant n-2$, and an integer $\delta$ where $\max(4, n-s) \leqslant \delta \leqslant n$.
The output will be all $\delta$-partial Seidel-feasible polynomials of degree $n$ that are divisible by $q(x)$.
\hyperref[alg:Seidelcharalgodd]{$\mathsf{FeasiblePartial}$} starts in exactly the same way as \hyperref[alg:Seidelcharalgoeven]{$\mathsf{FeasibleEven}$}.
However, beginning at Line~\ref{branchout}, \hyperref[alg:Seidelcharalgodd]{$\mathsf{FeasiblePartial}$} will do a different search based on the definition of $\delta$-partial Seidel-feasible polynomial.
Let $f(x)$ be an integer polynomial of degree $t=n-s \geqslant 2$ such that $q(x) \cdot f(x)$ is a $\delta$-partial Seidel-feasible polynomial.
By Lemma~\ref{lem:typefactor}, we have that $f(x-1)$ is weakly type~2.
Let $\bar{c}_0, \bar{c}_1, \dots, \bar{c}_t$ be integers such that $\displaystyle f(x-1) = \sum_{i=0}^t \bar{c}_i x^{t-i}$.
For every $i \in \{2, 3, \dots, t\}$, we let $\displaystyle \bar{f}_i(x) = \dv[t-i]{x} f(x-1)$.
We have the following relation
\begin{equation} \label{eq:consecdervfbar}
    \bar{f}_i(x) = \int_0^x \bar{f}_{i-1}(y) \dd{y} + (t-i)! \cdot \bar{c}_i
\end{equation}
for all $i \in \{3, 4, \dots, t\}$.
Suppose that we have determined $\bar{f}_{i-1}(x)$ for some $3 \leqslant i \leqslant t$.
From $\bar{f}_{i-1}(x)$, we want to enumerate all possible $\bar{f}_i(x)$.
Let $\displaystyle P(x) = \int_0^x \bar{f}_{i-1}(y) \dd{y}$ so that $\bar{f}_i(x) = P(x) + C$ for some $C \in \mathbb{Z}$.
First, we consider the case where $i=3$.
We know that $f(x-1)$ is weakly type~2 so $\bar{c}_i$ is divisible by $4$.
By $\eqref{eq:consecdervfbar}$, we have that $C$ must be divisible by $4(t-i)!$.
Hence, the possible values of $C$ are all integers from $\lceil h_k \rceil$ to $\lfloor h_{k+1} \rfloor$ that are also divisible by $4(t-i)!$.
We implement this in Line~\ref{i3} and Line~\ref{allC} of \hyperref[alg:Seidelcharalgodd]{$\mathsf{FeasiblePartial}$}.
Otherwise, suppose that $i>3$, which we cover in Line~\ref{i>3start} to Line~\ref{i>3end}.
Since $f(x-1)$ is weakly type~2, we have that $\bar{c}_i$ is divisible by $2^{i-1}$.
Condition~\eqref{ii} fixes a single value for $b_i$ modulo $2^i$.
Examining the product of $q(x-1)$ and $f(x-1)$, we observe that $\bar{c}_i$ can only satisfy exactly one of either $\bar{c}_i \equiv 0 \mod 2^i$ or $\bar{c}_i \equiv 2^{i-1} \mod 2^i$.
Thus, the least possible value of $C$ must be equal to either one of
\begin{equation*}
    C_1 = 2^{i-1} (t-i)! \left\lceil \frac{\lceil h_k \rceil}{2^{i-1} (t-i)!} \right\rceil, \; \text{or} \;\, C_2 = C_1 + 2^{i-1} (t-i)!.
\end{equation*}
Note that $C_1$ is the least integer greater than or equal to $\lceil h_k \rceil$ that is divisible by $2^{i-1} (t-i)!$.
To generate all possible values of $C$, starting from $C_1$ or $C_2$, we iteratively add $2^i (t-i)!$ while ensuring that the value does not exceed $\lfloor h_{k+1} \rfloor$.
We decide between $C_1$ or $C_2$ by using \hyperref[alg:checkmod]{$\mathsf{ModCheck}$}.
The polynomial $\bar{f}_{i-1}(x)$ determines the values of the coefficients $\bar{c}_0, \bar{c}_1, \dots, \bar{c}_{i-1}$ of $f(x-1)$.
We let 
\[
\bar{P}_1(x) = \sum_{j=0}^{i-1} \bar{c}_j x^{t-j} + \frac{C_1}{(t-i)!} x^{t-i}, \; \text{and} \;\, \bar{P}_2(x) = \sum_{j=0}^{i-1} \bar{c}_j x^{t-j} + \frac{C_2}{(t-i)!} x^{t-i}.
\]
If \hyperref[alg:checkmod]{$\mathsf{ModCheck}$}$\left( q(x-1) \cdot \bar{P}_1(x), i \right)$ returns \texttt{True}, then we choose $C_1$.
Otherwise, if it returns \texttt{False}, then we run \hyperref[alg:checkmod]{$\mathsf{ModCheck}$}$\left( q(x-1) \cdot \bar{P}_2(x), i \right)$.
If it returns \texttt{True}, then we choose $C_2$.
If both return \texttt{False}, then we can continue directly to the next possible $\bar{f}_{i-1}(x)$.
The corresponding steps in \hyperref[alg:Seidelcharalgodd]{$\mathsf{FeasiblePartial}$} are from Line~\ref{Pbar} to Line~\ref{i>3end}.
After we enumerate all possible $f(x-1)$, in Line~\ref{qxshift} we multiply each one of them by $q(x-1)$.
For any candidate $\bar{r}(x) = q(x-1) \cdot f(x-1)$, note that we only check Condition~\eqref{ii} for all $4 \leqslant i \leqslant t$.
Therefore, in Line~\ref{rbarfilter} we remove any candidate $\bar{r}(x)$ if there exists an integer $i$ such that $\max(4, t+1) \leqslant i \leqslant \delta$ and $\hyperref[alg:checkmod]{\mathsf{ModCheck}}(\bar{r}(x), i)$ returns \texttt{False}.
We also remove any $\bar{r}(x)$ that is not weakly type~2.
Lastly, we substitute $x$ with $x+1$ for the remaining polynomials.

Let $n \geqslant 1$ be an integer.
Denote by $\mathcal{F}_n$ the set of all Seidel-feasible polynomials of degree $n$.
We have that $\mathcal{R}_n \subseteq \mathcal{F}_n \subseteq \mathcal{T}_n$.

\begin{remark} \label{rem:deg5n6}
    We find that $\mathcal{R}_n = \mathcal{F}_n$ for all integers $1 \leqslant n \leqslant 5$.
    For degree 6, there already exist polynomials $p(x)$ such that $p(x) \in \mathcal{F}_6$ but $p(x) \notin \mathcal{R}_6$.
    One such example is the polynomial $p(x) = (x+1)^2(x-1)(x^3-x^2-13x+5)$ in Example~\ref{ex:interlacingseidelchi} in the next section.
\end{remark}

Suppose that $n \geqslant 5$ is odd and let $\delta$ be an integer such that $4 \leqslant \delta \leqslant n$.
Denote by $\mathcal{F}_n(\delta)$ the set of all $\delta$-partial Seidel-feasible polynomials of degree $n$.
Observe that
\[
\mathcal{R}_n \subseteq \mathcal{F}_n = \mathcal{F}_n(n) \subseteq \mathcal{F}_n(n-1) \subseteq \dots \subseteq \mathcal{F}_n(4) \subseteq \mathcal{T}_n.
\]
One can use \hyperref[alg:Seidelcharalgodd]{$\mathsf{FeasiblePartial}$} (or \hyperref[alg:Seidelcharalgoeven]{$\mathsf{FeasibleEven}$} if $n$ is even) to enumerate candidate characteristic polynomials for a Seidel matrix corresponding to a system of $n$ equiangular lines in $\mathbb R^d$.
Typically, one assumes that $n > 2d$.
In this case, by a theorem attributed to Neumann~\cite[p. 498]{LS73}, the smallest root of each candidate characteristic polynomial is an odd integer $\lambda$ of multiplicity at least $n-d$.
Accordingly, we want to find all polynomials in $\mathcal F_n$ that are divisible by $(x-\lambda)^{n-d}$.
Based on empirical evidence, it suffices to do the enumeration in $\mathcal F_n(\delta)$ where $\delta$ is much smaller than $n$.
This motivates the following question.

\begin{qn} \label{qn:delta}
    Fix an integer $d\geqslant 2$.
    Let $n \geqslant 2d+1$ be an odd integer and let $\delta$ be an integer such that $4 \leqslant \delta \leqslant n$.
    Let $\lambda$ be an odd negative integer and let $\mathcal G_n(\delta, d, \lambda)$ be the set of all polynomials in $\mathcal{F}_n(\delta)$ that are divisible by $(x-\lambda)^{n-d}$.
    Let $\delta(n, d, \lambda)$ be the minimum value of $\delta$ such that $\mathcal G_n(\delta, d, \lambda) = \mathcal G_n(n, d, \lambda)$.
    What is the behavior of $\delta(n, d, \lambda)$ relative to n?
\end{qn}

As an example, if $n=23$, $d=7$, and $\lambda=-3$, then $\delta(23, 7, -3) = 9$.

\begin{algorithm}[htbp]
\capstart
\caption{$\mathsf{FeasiblePartial}(n, q(x), \delta)$}
\label{alg:Seidelcharalgodd}
\SetKwData{out}{out}
\SetKwData{temp}{temp}
\SetKwData{points}{points}
\SetKwData{start}{start}
\SetKwData{period}{period}
\SetKwInOut{Input}{Input}
\SetKwInOut{Output}{Output}
\SetKw{KwBy}{by}
\Input{An odd integer $n \geqslant 5$, a monic polynomial $\displaystyle q(x) = \sum_{i=0}^s d_i x^{s-i} \in \mathbb{Z}[x]$ where $0 \leqslant s \leqslant n-2$, and an integer $\delta$ where $\max(4, n-s) \leqslant \delta \leqslant n$.}
\Output{All $\delta$-partial Seidel-feasible polynomials of degree $n$ that are divisible by $q(x)$.}

    $t \gets n-s$, and initialise each of \temp and \out as an empty array\;
    \lIf{\hyperref[alg:realrootedcheck]{$\mathsf{IsRealRooted}$}$(q(x)) = \False$ {\bf or} $q(x-1)$ is not weakly type~2}{\Return{\out}}
    
    $\displaystyle \hat{f}(x) \gets x^t - d_1 x^{t-1} + \left[ {d_1}^2-d_2-\frac{n(n-1)}{2} \right] x^{t-2}$, where we set $d_1=0$ if $s=0$, and $d_2=0$ if $s \leqslant 1$\;
    $p_2(x) \gets$ $(t-2)$th derivative of $\hat{f}(x-1)$\;
    \eIf{\hyperref[alg:realrootedcheck]{$\mathsf{IsRealRooted}$}$(p_2(x)) = \True$}
    {
        Append $p_2(x)$ to \out\;
        \For{$i = 3$ \KwTo $t$}
        {
            \For{$j = 1$ \KwTo $\# \out$}
            {
                $\displaystyle k \gets \left\lfloor \frac{i-1}{2} \right\rfloor$, $p(x) \gets$ the $j$th entry of \out, and $\displaystyle P(x) \gets \int_0^x p(y) \dd{y}$\;
                
                Find numerical approximations $\tilde{h}_k$ and $\tilde{h}_{k+1}$ of $h_k$ and $h_{k+1}$, respectively, where $h_1 \leqslant h_2 \leqslant \dots \leqslant h_{i-1}$ are all roots of $\Disc_x(P(x)+C)$ as a univariate polynomial in $C$\;

                $\points \gets \hyperref[alg:endpoints]{\mathsf{EndPoints}}(P(x), \tilde{h}_k, \tilde{h}_{k+1})$\;

                \eIf{$\points \neq \varnothing$}{$\lceil h_k \rceil \gets \min(\points)$, and $\lfloor h_{k+1} \rfloor \gets \max(\points)$\;}{continue\;}
                
                \eIf{$i=3$\label{branchout}}
                {
                    $\period \gets 4(t-i)!$, and $\displaystyle \start \gets \period \left\lceil \frac{\lceil h_k \rceil}{\period} \right\rceil$\;\label{i3}
                }
                {
                    $\period \gets 2^i (t-i)!$, $\displaystyle C_1 \gets \frac{\period}{2} \left\lceil \frac{2\lceil h_k \rceil}{\period} \right\rceil$, and $\displaystyle C_2 \gets C_1+\frac{\period}{2}$\;\label{i>3start}
                    $\bar{P}_1(x) \gets P(x)+C_1$, and $\bar{P}_2(x) \gets P(x)+C_2$\;\label{Pbar}
                    \lFor{$m=1$ \KwTo $t-i$}{$\displaystyle \bar{P}_1(x) \gets \int_0^x \bar{P}_1(y) \dd{y}$, and $\displaystyle \bar{P}_2(x) \gets \int_0^x \bar{P}_2(y) \dd{y}$}
                    \uIf{$\hyperref[alg:checkmod]{\mathsf{ModCheck}}$\(\left( q(x-1) \cdot \bar{P}_1(x), i \right) = \True\)}{$\start \gets C_1$\;}
                    \uElseIf{$\hyperref[alg:checkmod]{\mathsf{ModCheck}}$\(\left( q(x-1) \cdot \bar{P}_2(x), i \right) = \True\)}{$\start \gets C_2$\;}
                    \Else{continue\;\label{i>3end}}
                }
                
                \lFor{$l = \start$ \KwTo $\lfloor h_{k+1} \rfloor$ \KwBy \period}
                {append $P(x)+l$ to \temp}\label{allC}
                    
            }
            $\out \gets \temp$, and $\temp \gets \varnothing$\;
        }
    }
    {\Return{\out}\;}

    Multiply each polynomial in \out by $q(x-1)$\;\label{qxshift}

    Remove any polynomial $\bar{r}(x)$ in \out if $\bar{r}(x)$ is not weakly type~2, or if there exists an integer $i$ such that $\max(4, t+1) \leqslant i \leqslant \delta$ and $\hyperref[alg:checkmod]{\mathsf{ModCheck}}(\bar{r}(x), i) = \False$\;\label{rbarfilter}

    Update \out by substituting $x$ with $x+1$ for each remaining polynomial in \out\;
    
    \Return{\out}\;

\end{algorithm}

\section{Enumerating Seidel interlacing polynomials utilising linear programming} \label{sec:LPalgo}

In this section, we describe linear programming algorithms that produce all \emph{($\delta$-partial) Seidel interlacing polynomials} of a given Seidel trace polynomial.

Let $n \geqslant 2$ be an integer.
Let $p(x), q(x) \in \mathbb{R}[x]$ be real-rooted polynomials such that $\deg p(x) = n$ and $\deg q(x) = n-1$.
Let $\mu_1 \leqslant \mu_2 \leqslant \dots \leqslant \mu_{n-1}$ be the roots of $q(x)$ and let $\lambda_1 \leqslant \lambda_2 \leqslant \dots \leqslant \lambda_n$ be the roots of $p(x)$.
We say that $q(x)$ \textbf{interlaces} $p(x)$ if $\lambda_i \leqslant \mu_i \leqslant \lambda_{i+1}$ for all $i \in \{1, 2, \dots, n-1\}$.
Let $M$ be a real symmetric matrix of order $n \geqslant 2$.
Let $i \in \{1,2,\dots,n\}$ and denote by $M[i]$ the principal submatrix of $M$ obtained by deleting its $i$th row and column.
The Cauchy's interlacing theorem states that $\Char_{M[i]}(x)$ interlaces $\Char_M(x)$.

\begin{lem}[\cite{Fisk05, Hwang04}] \label{lem:cauchyinterlacing}
    Let $M$ be a real symmetric matrix of order $n \geqslant 2$ and let $i \in \{1, 2, \dots, n\}$.
    Then $\Char_{M[i]}(x)$ interlaces $\Char_M(x)$.
\end{lem}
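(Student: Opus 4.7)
The plan is to prove Cauchy's interlacing theorem via the Courant--Fischer min-max characterisation of eigenvalues of a real symmetric matrix. Since interlacing of the characteristic polynomials is equivalent to the corresponding interlacing of their root sequences, it suffices to show that with $\lambda_1 \leqslant \dots \leqslant \lambda_n$ the eigenvalues of $M$ and $\mu_1 \leqslant \dots \leqslant \mu_{n-1}$ the eigenvalues of $M[i]$, one has $\lambda_k \leqslant \mu_k \leqslant \lambda_{k+1}$ for every $k \in \{1, \dots, n-1\}$. As a preliminary reduction, conjugating $M$ by the permutation matrix that swaps indices $i$ and $n$ preserves $\Char_M(x)$ and merely relabels the rows and columns of the deleted submatrix, so I may assume $i = n$.

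The key observation is that $M[n]$ realises the restriction of the quadratic form $v \mapsto v^\transpose M v$ to the hyperplane $H \coloneqq e_n^\perp \subseteq \mathbb{R}^n$: for $v = (\tilde v, 0)$ with $\tilde v \in \mathbb{R}^{n-1}$, one has $v^\transpose M v = \tilde v^\transpose M[n] \tilde v$ and $\|v\| = \|\tilde v\|$. Applying the Courant--Fischer theorem to both $M$ and $M[n]$ then yields
$$\lambda_k = \min_{\dim W = k} \max_{0 \neq v \in W} \frac{v^\transpose M v}{v^\transpose v}, \qquad \mu_k = \min_{\substack{\dim W = k \\ W \subseteq H}} \max_{0 \neq v \in W} \frac{v^\transpose M v}{v^\transpose v}.$$
The lower bound $\lambda_k \leqslant \mu_k$ is then immediate, since the second expression minimises the same quantity over a strictly smaller family of $k$-dimensional subspaces.

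For the upper bound $\mu_k \leqslant \lambda_{k+1}$, I would fix orthonormal eigenvectors $v_1, \dots, v_n$ of $M$ corresponding to $\lambda_1, \dots, \lambda_n$, and consider the intersection $U \coloneqq \operatorname{span}(v_1, \dots, v_{k+1}) \cap H$. A dimension count inside $\mathbb{R}^n$ gives $\dim U \geqslant (k+1) + (n-1) - n = k$, so $U$ contains a $k$-dimensional subspace $W \subseteq H$. On the larger space $\operatorname{span}(v_1, \dots, v_{k+1})$ the Rayleigh quotient is bounded above by $\lambda_{k+1}$, so in particular $\max_{0 \neq v \in W} v^\transpose M v / v^\transpose v \leqslant \lambda_{k+1}$, and the min-max formula for $\mu_k$ yields $\mu_k \leqslant \lambda_{k+1}$. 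Translating these eigenvalue inequalities back into the root-interlacing of $\Char_{M[n]}(x)$ with $\Char_M(x)$ completes the proof.

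The main technical care lies in the dimension count guaranteeing $\dim U \geqslant k$ and the explicit verification that the Rayleigh quotient is controlled by $\lambda_{k+1}$ on $\operatorname{span}(v_1, \dots, v_{k+1})$; neither of these is genuinely an obstacle, and once the min-max characterisation is in hand, the rest of the argument is short and standard. One alternative worth noting is a more elementary route via the identity $\Char_M'(x) = \sum_{j=1}^n \Char_{M[j]}(x)$ together with a perturbation argument, but the min-max approach outlined above seems cleanest and is the one I would commit to.
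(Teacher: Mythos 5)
The paper does not prove this lemma; it is stated as a known result with citations to \cite{Fisk05, Hwang04}, so there is no in-paper argument to compare against. Your proposed proof is the standard Courant--Fischer min-max argument, and it is correct: the reduction to $i=n$ by a permutation conjugation is sound; the identification of $M[n]$ with the restriction of the quadratic form to $H = e_n^\perp$ is exactly the right way to make the two min-max formulas directly comparable; the inequality $\lambda_k \leqslant \mu_k$ follows because one is minimising over a subfamily of subspaces; and for $\mu_k \leqslant \lambda_{k+1}$, the dimension count $\dim U \geqslant (k+1)+(n-1)-n = k$ together with the Rayleigh bound $v^\transpose M v \leqslant \lambda_{k+1}\, v^\transpose v$ on $\operatorname{span}(v_1,\dots,v_{k+1})$ does the job. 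This is a self-contained, clean proof; the only cost relative to simply citing the literature, as the paper does, is that you must take the Courant--Fischer theorem itself as a prerequisite. The alternative route you mention via $\Char_M'(x) = \sum_{j=1}^n \Char_{M[j]}(x)$ and a perturbation argument would also work but is more delicate to make rigorous when $M$ has repeated eigenvalues, so your chosen route is the better one to commit to.
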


Let $S$ be a Seidel matrix of order $n \geqslant 2$.
By Lemma~\ref{lem:cauchyinterlacing}, for all $i \in \{1, 2, \dots, n\}$, we have that $\Char_{S[i]}(x)$ interlaces $\Char_S(x)$.

Let $n \geqslant 2$ be an integer.
Let $p(x) \in \mathcal{T}_n$ and let $q(x)$ be an integer polynomial of degree $n-1$.
Denote by $\mathcal{J}(p(x))$ the set of all $q(x)$ such that $q(x) \in \mathcal{T}_{n-1}$ and $q(x)$ interlaces $p(x)$.
Furthermore, if $q(x) \in \mathcal{F}_{n-1}$ and $q(x)$ interlaces $p(x)$, then we call $q(x)$ a \textbf{Seidel interlacing polynomial} of $p(x)$.
Denote by $\mathcal{I}(p(x))$ the set of all Seidel interlacing polynomials of $p(x)$.
Clearly, we have $\mathcal{I}(p(x)) \subseteq \mathcal{J}(p(x))$.
Suppose that $n \geqslant 6$ is even and let $\delta$ be an integer such that $4 \leqslant \delta \leqslant n-1$.
If $q(x) \in \mathcal{F}_{n-1}(\delta)$ and $q(x)$ interlaces $p(x)$, then we call $q(x)$ a \textbf{$\delta$-partial Seidel interlacing polynomial} of $p(x)$.
Denote by $\mathcal{I}(p(x), \delta)$ the set of all $\delta$-partial Seidel interlacing polynomials of $p(x)$.
Observe that
\[
\mathcal{I}(p(x)) = \mathcal{I}(p(x), n-1) \subseteq \mathcal{I}(p(x), n-2) \subseteq \dots \subseteq \mathcal{I}(p(x), 4) \subseteq \mathcal{J}(p(x)).
\]

\begin{prop} \label{prop:interlacingchifactor}
    Let $n \geqslant 2$ be an integer.
    Let $p(x) \in \mathcal{T}_n$ and let $q(x) \in \mathcal{J}(p(x))$.
    Then $q(x) = \gcd(p(x), p^\prime(x)) \cdot f(x)$ for some monic real-rooted integer polynomial $f(x)$ that interlaces $\Min(p, x)$.
\end{prop}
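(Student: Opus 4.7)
The plan is to prove divisibility first, then realness and monicity, and finally the interlacing. Throughout, let $g(x) \coloneqq \gcd(p(x),p'(x))$, taken monic in $\mathbb{Q}[x]$; since $p(x)$ is a monic integer polynomial, the Euclidean algorithm (or Gauss's lemma) forces $g(x) \in \mathbb{Z}[x]$. Write the distinct roots of $p(x)$ as $\alpha_1<\alpha_2<\dots<\alpha_r$ with multiplicities $m_1,\dots,m_r$, so $\sum_j m_j = n$, and list the roots of $p(x)$ and $q(x)$ as $\lambda_1\leqslant\dots\leqslant\lambda_n$ and $\mu_1\leqslant\dots\leqslant\mu_{n-1}$, respectively, so that $\lambda_i\leqslant\mu_i\leqslant\lambda_{i+1}$ for all $i$. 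By Lemma~\ref{lem:dervmult}, $\mult(\alpha_j,g)=m_j-1$ for every $j$, so $\deg g=n-r$.

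The first step is to show $g(x)\mid q(x)$. For each $j$, let $s_j \coloneqq m_1+\dots+m_{j-1}+1$, so that $\lambda_{s_j}=\lambda_{s_j+1}=\dots=\lambda_{s_j+m_j-1}=\alpha_j$. For every index $i$ with $s_j\leqslant i\leqslant s_j+m_j-2$ the interlacing forces $\mu_i=\alpha_j$; hence $\mult(\alpha_j,q)\geqslant m_j-1=\mult(\alpha_j,g)$. This yields $g(x)\mid q(x)$ in $\mathbb{C}[x]$, and then in $\mathbb{Q}[x]$ because both are rational polynomials with $g$ monic. Define $f(x)\coloneqq q(x)/g(x)$. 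Since both $g$ and $q$ are monic and lie in $\mathbb{Z}[x]$, standard polynomial division shows $f(x)\in\mathbb{Z}[x]$ is monic. The roots of $f(x)$ are a sub-multiset of the roots of $q(x)$, which are all real, so $f(x)$ is real-rooted.

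The second step is the interlacing of $f(x)$ with $\Min(p,x)$, whose roots are exactly $\alpha_1<\dots<\alpha_r$. The $n-1$ interlacing inequalities partition as follows: the $\sum_j(m_j-1)=n-r$ indices $i\in\{s_j,\dots,s_j+m_j-2\}$ produce the $\mu_i=\alpha_j$ forced above, accounting precisely for the factor $g(x)$. The remaining $r-1$ indices are $i=s_j+m_j-1$ for $j=1,\dots,r-1$, at which $\alpha_j=\lambda_i\leqslant\mu_i\leqslant\lambda_{i+1}=\alpha_{j+1}$. Setting $\nu_j\coloneqq\mu_{s_j+m_j-1}$, these $r-1$ values, listed in order of $j$, are nondecreasing and satisfy $\alpha_j\leqslant\nu_j\leqslant\alpha_{j+1}$. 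Since $\deg f=n-1-\deg g=r-1$, the $\nu_j$'s are exactly the roots of $f(x)$, and the inequalities $\alpha_j\leqslant\nu_j\leqslant\alpha_{j+1}$ are precisely the statement that $f(x)$ interlaces $\Min(p,x)$.

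There is no single hard step here; the only subtlety is bookkeeping the position of the unconstrained $\mu_i$'s among the blocks of repeated $\lambda_i$'s, together with confirming that $g(x)\in\mathbb{Z}[x]$ and that monic-by-monic division preserves integrality. The multiplicity formula from Lemma~\ref{lem:dervmult} does all of the real work.
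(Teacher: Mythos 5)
Your proof is correct and takes the same approach the paper has in mind, but you supply all the details that the paper's one-sentence proof leaves implicit. The paper simply asserts ``Since $q(x)$ interlaces $p(x)$, then there exists a monic real-rooted integer polynomial $f(x)$ such that $q(x) = \gcd(p(x), p'(x)) \cdot f(x)$ and $f(x)$ interlaces $\Min(p, x)$''; your write-up carries this out by partitioning the $n-1$ interlacing indices into those forced to land on a repeated root $\alpha_j$ (accounting for the factor $\gcd(p,p')$ via the multiplicity formula from Lemma~\ref{lem:dervmult}) and the remaining $r-1$ free indices whose roots $\nu_j$ satisfy $\alpha_j \leqslant \nu_j \leqslant \alpha_{j+1}$ and hence form $f$. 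You also correctly handle the integrality bookkeeping: $\gcd(p,p') \in \mathbb{Z}[x]$ by Gauss's lemma since $p$ is monic, and monic-by-monic division preserves $\mathbb{Z}[x]$, so $f \in \mathbb{Z}[x]$. This is a faithful, complete version of the argument the paper compresses into a single sentence; there is no gap.
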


\begin{proof}
    By \eqref{eq:minpx}, we have $p(x)=\gcd(p(x), p^\prime(x)) \cdot \Min(p, x)$.
    Since $q(x)$ interlaces $p(x)$, then there exists a monic real-rooted integer polynomial $f(x)$ such that $q(x) = \gcd(p(x), p^\prime(x)) \cdot f(x)$ and $f(x)$ interlaces $\Min(p, x)$.
\end{proof}

Let $p(x) \in \mathcal{T}_n$ and let $q(x) \in \mathcal{J}(p(x))$ for some integer $n \geqslant 2$.
Let $\displaystyle f(x) = \frac{q(x)}{\gcd(p(x), p^\prime(x))}$, which is an integer polynomial by Proposition~\ref{prop:interlacingchifactor}.
Analogous to \cite[Lemma 5.4]{GY19}, we can determine the first three leading coefficients of $f(x)$ from the coefficients of $\Min(p, x)$ as follows:

\begin{lem} \label{lem:interlacingfcoeff}
    Let $n \geqslant 3$ be an integer.
    Let $p(x) \in \mathcal{T}_n$ and let $r = \deg \Min(p, x)$ where $3 \leqslant r \leqslant n$.
    Let $q(x) \in \mathcal{J}(p(x))$ and let 
    \[
    f(x) = \frac{q(x)}{\gcd(p(x), p^\prime(x))} \in \mathbb{Z}[x].
    \]
    Let $b_0, b_1, \dots, b_r$ be integers such that $\displaystyle \Min(p, x) = \sum_{i=0}^r b_i x^{r-i}$ and let $c_0, c_1, \dots, c_{r-1}$ be integers such that $\displaystyle f(x) = \sum_{i=0}^{r-1} c_i x^{r-1-i}$.
    Then, we have $c_0 = b_0 = 1$, $c_1 = b_1$, and $c_2 = b_2+n-1$.
\end{lem}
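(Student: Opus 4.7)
The plan is to use the factorisation from Proposition~\ref{prop:interlacingchifactor} together with coefficient comparison to directly extract $c_0$, $c_1$, $c_2$ from the defining constraints of $\mathcal T_n$ and $\mathcal T_{n-1}$.

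First I would set $g(x) \coloneqq \gcd(p(x), p^\prime(x))$, which has degree $n-r$, and write its top three leading coefficients as $1, g_1, g_2$. By definition of $\Min(p,x)$ we have $p(x) = g(x) \cdot \Min(p, x)$, while by Proposition~\ref{prop:interlacingchifactor} we have $q(x) = g(x) \cdot f(x)$, both identities holding in $\mathbb{Z}[x]$ with all factors monic. Note also that the hypothesis $r \geqslant 3$ ensures $f(x)$ has degree at least $2$, so the coefficients $c_0, c_1, c_2$ are all well-defined.

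Next I would equate coefficients in $p(x) = g(x) \cdot \Min(p,x)$. Since $p(x) \in \mathcal{T}_n$, the coefficient of $x^{n-1}$ in $p$ is $0$ and that of $x^{n-2}$ is $-\binom{n}{2}$. Matching these yields $g_1 + b_1 = 0$ and $g_2 + g_1 b_1 + b_2 = -\binom{n}{2}$, giving
\[
g_1 = -b_1, \qquad g_2 = b_1^2 - b_2 - \binom{n}{2}.
\]
Then I would equate coefficients in $q(x) = g(x) \cdot f(x)$. Since $q(x) \in \mathcal{T}_{n-1}$, the coefficient of $x^{n-2}$ in $q$ is $0$ and that of $x^{n-3}$ is $-\binom{n-1}{2}$. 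This gives $c_0 = 1$, $c_1 + g_1 = 0$, and $c_2 + g_1 c_1 + g_2 = -\binom{n-1}{2}$. Substituting the expressions for $g_1$ and $g_2$ yields $c_1 = b_1$ and
\[
c_2 = -\binom{n-1}{2} - g_2 - g_1 c_1 = -\binom{n-1}{2} - \left(b_1^2 - b_2 - \binom{n}{2}\right) + b_1^2 = b_2 + \binom{n}{2} - \binom{n-1}{2},
\]
which simplifies to $c_2 = b_2 + n - 1$, as desired.

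There is no real obstacle here: the argument is a pure coefficient comparison once the factorisation $q = g \cdot f$ from Proposition~\ref{prop:interlacingchifactor} is invoked. The only mild subtlety is making sure that the $(n-3)$rd coefficient of $q$ really is available (i.e.\ $n-1 \geqslant 2$, which follows from $n \geqslant 3$) and that the quantities $b_2$ and $c_2$ exist as written (which follows from $r \geqslant 3$); both are guaranteed by the hypotheses.
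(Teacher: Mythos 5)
Your proof is correct and follows essentially the same route as the paper's: both write $p = g \cdot \Min(p,x)$ and $q = g \cdot f$ (with $g = \gcd(p,p')$ monic), equate the top three coefficients in each product, and solve the resulting small linear system. The only detail the paper spells out that you elide is the degenerate case $\deg g(x) \leqslant 1$, where $g$ does not literally have three leading coefficients $1, g_1, g_2$; the paper handles this by explicitly declaring $d_1 = 0$ if $n=r$ and $d_2 = 0$ if $n-r \leqslant 1$, and your argument would be airtight with the same convention, since the convolution identities you write down are exactly what the product formula gives once missing coefficients are read as zero.
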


\begin{proof}
    Note that $p(x) = \gcd(p(x), p^\prime(x)) \cdot \Min(p, x)$ and $q(x) = \gcd(p(x), p^\prime(x)) \cdot f(x)$.
    Let $d_0$, $d_1$, $\dots$, $d_{n-r}$ be integers such that $\displaystyle \gcd(p(x), p^\prime(x)) = \sum_{i=0}^{n-r} d_i x^{n-r-i}$.
    Since $p(x)$ and $q(x)$ are both monic, we have $d_0 = c_0 = b_0 = 1$.
    We deduce that
    \begin{align*}
        b_1 + d_1 = c_1 + d_1 &= 0, \; b_2 + d_2 + b_1 d_1 = -\binom{n}{2}, \; \text{and} \;\, c_2 + d_2 + c_1 d_1 = -\binom{n-1}{2},
    \end{align*}
    where we set $d_1=0$ if $n=r$, and $d_2=0$ if $n-r \leqslant 1$.
    Therefore, we conclude that $c_1=b_1$ and $c_2=b_2+n-1$.
\end{proof}

Let $n \geqslant 0$ be an integer and let $p(x) \in \mathbb{R}[x]$ be a nonzero polynomial of degree $n$.
Let $\coeff(p)$ be a vector in $\mathbb{R}^{n+1}$ such that for all $i \in \{1,2,\dots,n+1\}$, the $i$th entry of $\coeff(p)$ is equal to the coefficient of $x^{n-i+1}$ in $p(x)$.
We call $\coeff(p)$ the \textbf{coefficient vector} of $p(x)$.
Let $m$ be a positive integer and let $\mathcal{P} = \{p_1(x), p_2(x), \dots, p_m(x)\}$ be a set of polynomials in $\mathbb{R}[x]$, each of degree $n$.
We define $\coeff(\mathcal{P}) \coloneqq \{\coeff(p_i) : 1 \leqslant i \leqslant m\}$.
In particular, we have that $\coeff(\{p(x)\}) = \{\coeff(p)\}$.

Let $p(x) \in \mathbb{R}[x]$ be a real-rooted polynomial of degree $n \geqslant 1$.
Let $r$ be the degree of $\Min(p, x)$ where $1 \leqslant r \leqslant n$.
Let $\lambda_1 < \lambda_2 < \dots < \lambda_r$ be distinct real numbers such that \( \displaystyle \Min(p, x) = \prod_{i=1}^r (x-\lambda_i) \).
For each $j \in \{1, 2, \dots, r\}$, define
\[ \displaystyle \Min_j(p,x) \coloneqq \prod_{\substack{1 \leqslant i \leqslant r, \mathstrut \\ i \neq j}} (x-\lambda_i). \]
Clearly, for all $j \in \{1, 2, \dots, r\}$, we have $\Min(p, x) = (x-\lambda_j) \cdot \Min_j(p, x)$.

\begin{thm} \label{thm:gamma}
    Let $n \geqslant 2$ be an integer.
    Let $p(x) \in \mathcal{T}_n$ and let $r = \deg \Min(p, x)$ where $1 \leqslant r \leqslant n$.
    Let $q(x) \in \mathcal{J}(p(x))$ and let 
    \[
    f(x) = \frac{q(x)}{\gcd(p(x), p^\prime(x))} \in \mathbb{Z}[x].
    \]
    Then, there exist nonnegative real numbers $\gamma_1,\gamma_2,\dots,\gamma_r$ such that
    \begin{equation*}
        \coeff(f(x-1)) = \sum_{j=1}^r \gamma_j \cdot \coeff(\Min_j(p, x-1)).
    \end{equation*}
\end{thm}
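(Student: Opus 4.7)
The plan is to first use Proposition~\ref{prop:interlacingchifactor} to reduce to the ``squarefree'' setting, then obtain the coefficients $\gamma_j$ via a partial-fraction (Lagrange interpolation) identity, and finally verify nonnegativity by a sign analysis using the interlacing condition. The substitution $x \mapsto x-1$ is handled at the very end by linearity.

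More precisely, by Proposition~\ref{prop:interlacingchifactor}, $f(x)$ is a monic real-rooted integer polynomial of degree $r-1$ that interlaces $\Min(p,x)$. Writing $\lambda_1 < \lambda_2 < \dots < \lambda_r$ for the distinct roots of $\Min(p,x)$ and $\mu_1 \leqslant \mu_2 \leqslant \dots \leqslant \mu_{r-1}$ for the roots of $f(x)$, the interlacing says $\lambda_i \leqslant \mu_i \leqslant \lambda_{i+1}$ for all $i \in \{1,\dots,r-1\}$. The polynomials $\Min_1(p,x), \dots, \Min_r(p,x)$ are $r$ monic polynomials of degree $r-1$ whose leading coefficients span the one-dimensional space of top coefficients, and evaluating $\Min_j(p, \lambda_i) = 0$ for $i \neq j$ shows that they are linearly independent over $\mathbb{R}$; hence they form a basis of the $r$-dimensional space of polynomials of degree at most $r-1$. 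I therefore expand
\[
    f(x) = \sum_{j=1}^r \gamma_j \cdot \Min_j(p, x),
\]
and evaluating both sides at $\lambda_j$ gives the explicit formula $\gamma_j = f(\lambda_j)/\Min_j(p, \lambda_j)$.

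The main step is showing $\gamma_j \geqslant 0$. For this, I would observe that $\Min_j(p, \lambda_j) = \prod_{i \neq j}(\lambda_j - \lambda_i)$ has sign $(-1)^{r-j}$, since exactly $r-j$ of the factors $\lambda_j - \lambda_i$ are negative. Using the interlacing, I would then compute $f(\lambda_j) = \prod_{k=1}^{r-1}(\lambda_j - \mu_k)$; for $k < j$ we have $\mu_k \leqslant \lambda_{k+1} \leqslant \lambda_j$ and for $k \geqslant j$ we have $\mu_k \geqslant \lambda_k \geqslant \lambda_j$, so $f(\lambda_j)$ is either $0$ (when some $\mu_k = \lambda_j$) or has sign $(-1)^{r-j}$ as well. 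In either case the quotient $\gamma_j$ is nonnegative.

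Finally, the coefficient-vector map $g(x) \mapsto \coeff(g(x-1))$ is $\mathbb{R}$-linear, so applying it to the identity $f(x) = \sum_{j=1}^r \gamma_j \Min_j(p,x)$ immediately yields
\[
    \coeff(f(x-1)) = \sum_{j=1}^r \gamma_j \cdot \coeff(\Min_j(p, x-1)),
\]
completing the proof. The only subtlety I anticipate is bookkeeping for the degenerate cases in the sign argument (when some $\mu_k$ coincides with a $\lambda_j$, forcing $\gamma_j=0$), but these cause no real difficulty.
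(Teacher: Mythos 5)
Your proof is correct, and it takes a genuinely different route from the paper. The paper disposes of the key nonnegativity claim in one sentence by citing Fisk (\cite[Remark~1.21 and Lemma~1.20]{Fisk06}), applied directly to the shifted polynomials $f(x-1)$ and $\Min(p,x-1)$. You instead give a self-contained argument: you observe that the $\Min_j(p,x)$ form a basis of the space of polynomials of degree at most $r-1$ (evaluation at the $\lambda_i$ kills all but one term), derive the Lagrange-interpolation formula $\gamma_j = f(\lambda_j)/\Min_j(p,\lambda_j)$, and read off the sign of each factor from the interlacing inequalities $\lambda_i \leqslant \mu_i \leqslant \lambda_{i+1}$ to conclude $\gamma_j \geqslant 0$. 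The passage to the shifted coefficient vectors is then immediate because $g(x)\mapsto\coeff(g(x-1))$ is $\mathbb{R}$-linear on the space of polynomials of degree exactly $r-1$, and both $f(x-1)$ and each $\Min_j(p,x-1)$ are monic of that degree. Your version has the merit of being self-contained and of exhibiting the $\gamma_j$ explicitly, which could be useful in an implementation; the paper's version is of course shorter. The degenerate case $r=1$ (where $f$ and $\Min_1(p,x)$ are both the constant polynomial $1$) is handled correctly by your empty-product conventions, so there is no gap there either.
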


\begin{proof}
    By Proposition~\ref{prop:interlacingchifactor}, we have that $f(x-1)$ is real-rooted and it also interlaces $\Min(p, x-1)$.
    By \cite[Remark 1.21]{Fisk06}, which extends \cite[Lemma 1.20]{Fisk06}, there exist nonnegative real numbers $\gamma_1,\gamma_2,\dots,\gamma_r$ such that $\displaystyle \coeff(f(x-1)) = \sum_{j=1}^r \gamma_j \cdot \coeff(\Min_j(p, x-1))$.
\end{proof}

Theorem~\ref{thm:gamma} above allows us to leverage linear programming to efficiently enumerate Seidel interlacing polynomials.
This incorporation of linear programming eliminates the need for real-rooted integer polynomial enumeration steps from \hyperref[alg:numrealrooted]{$\mathsf{AllRealRooted}$}, resulting in a significant speedup of computations.
On the other hand, however, linear programming involves extensive use of numerical analysis, which can introduce numerical approximation errors.
One can get around these issues by using Lemma~\ref{lem:omegafloorceil}.

The input of \hyperref[alg:interlacingLPoddeven]{$\mathsf{InterlacingEven}$} is a Seidel trace polynomial $p(x)$ of odd degree $n \geqslant 3$ where $3 \leqslant \deg \Min(p, x) \leqslant n$.
The output is the set $\mathcal{I}(p(x))$.
Let $r$ be the degree of $\Min(p, x)$ so $3 \leqslant r \leqslant n$.
By Proposition~\ref{prop:interlacingchifactor}, we want to find all possible polynomials $f(x)$ of degree $r-1$ such that $\gcd(p(x), p^\prime(x)) \cdot f(x) \in \mathcal{I}(p(x))$.
Let $b_0, b_1, \dots, b_r$ be integers such that $\displaystyle \Min(p, x) = \sum_{i=0}^r b_i x^{r-i}$, and let $c_0, c_1, \dots, c_{r-1}$ be integers such that $\displaystyle f(x) = \sum_{i=0}^{r-1} c_i x^{r-1-i}$.
By Lemma~\ref{lem:interlacingfcoeff}, we have that $c_0 = b_0 = 1$, $c_1 = b_1$, and $c_2 = b_2+n-1$.
Similar to algorithms in Section~\ref{sec:Seidelcharalgo}, we enumerate all possible $f(x)$ indirectly by enumerating all possible $f(x-1)$.
Let $\bar{c}_0, \bar{c}_1, \dots, \bar{c}_{r-1}$ be integers such that $\displaystyle f(x-1) = \sum_{i=0}^{r-1} \bar{c}_i x^{r-1-i}$.
We obtain $\bar{c}_0 = 1$, $\bar{c}_1 = b_1-r+1$, and
\[
\bar{c}_2 = c_2 + \binom{r-1}{2} - c_1(r-2) = b_2+n-1+\frac{1}{2}(r-1-2b_1)(r-2).
\]
We store this information as the vector in Line~\ref{first3coeffs} of \hyperref[alg:interlacingLPoddeven]{$\mathsf{InterlacingEven}$}.
By Theorem~\ref{thm:gamma}, there exist nonnegative real numbers $\gamma_1,\gamma_2,\dots,\gamma_r$ such that
\begin{equation} \label{eq:cbargamma}
    \left( \bar{c}_0, \bar{c}_1, \dots, \bar{c}_{r-1} \right)^{\intercal} = \sum_{j=1}^r \gamma_j \cdot \coeff(\Min_j(p, x-1)).
\end{equation}
Let $\bm{\gamma} = (\gamma_1, \gamma_2, \dots, \gamma_r)^{\intercal}$.
Since the values of $\bar{c}_0$, $\bar{c}_1$, $\bar{c}_2$, and all coefficients of $\Min_j(p, x-1)$ are fixed, we can apply linear programming using \eqref{eq:cbargamma} to recursively determine bounds for $\bar{c}_i$ where $3 \leqslant i \leqslant r-1$.
As detailed in Line~\ref{numB}, we store all coefficients of $\Min_j(p, x-1)$ in a square matrix $B$ of order $r$.
However, note that $\Min_j(p, x-1)$ may not always be an integer polynomial.
It is far more efficient, especially with linear programming, to use numerical approximations instead of exact values for the entries of $B$.\footnote{In our computations, we use 75 significant digits of precision.}
Let $\bar{\bm{\gamma}} = B \bm{\gamma}$ (Line~\ref{gammabar}) and for all $i \in \{1,2,\dots,r\}$, denote by $\bar{\bm{\gamma}}(i)$ the $i$th entry of $\bar{\bm{\gamma}}$.
By \eqref{eq:cbargamma}, it follows that $\bar{\bm{\gamma}}(i) = \bar{c}_{i-1}$ for all $i \in \{1,2,\dots,r\}$.
Suppose that we have determined the vector $\left( \bar{c}_0, \bar{c}_1, \dots, \bar{c}_{i-1} \right)^{\intercal}$ for some $3 \leqslant i \leqslant r-1$.
We formulate our optimisation problems for $\bar{c}_i$ as the following linear programs:
\begin{align}
    &\min_{1 \leqslant k \leqslant i} \left\{ \bar{\bm{\gamma}}(i+1) : \bar{\bm{\gamma}}(k)=\bar{c}_{k-1}, \bm{\gamma} \geqslant \bm{0} \right\} \tag{$\mathrm{LP}_1$}\label{eq:LP1}, \\
    &\max_{1 \leqslant k \leqslant i} \left\{ \bar{\bm{\gamma}}(i+1) : \bar{\bm{\gamma}}(k)=\bar{c}_{k-1}, \bm{\gamma} \geqslant \bm{0} \right\}. \tag{$\mathrm{LP}_2$}\label{eq:LP2}
\end{align}
Note that \eqref{eq:LP1} and \eqref{eq:LP2} are bounded since $0 \leqslant \gamma_j \leqslant 1$ for all $j \in \{1,2,\dots,r\}$.
We implement \eqref{eq:LP1} and \eqref{eq:LP2} in Line~\ref{LPmin} and Line~\ref{LPmax}, respectively.
Suppose that solving \eqref{eq:LP1} and \eqref{eq:LP2} yields us the optimum values $\mathsf{minLP}$ and $\mathsf{maxLP}$, respectively.
Since $\bar{c}_i$ is an integer, we are looking for the values of $\lceil \mathsf{minLP} \rceil$ and $\lfloor \mathsf{maxLP} \rfloor$.
Additionally, by Lemma~\ref{lem:typefactor}, we know that $f(x-1)$ is type~2 so $\bar{c}_i$ is divisible by $2^i$.
By Lemma~\ref{lem:omegafloorceil}, assuming that the absolute error is far less than $1/2$, the interval of integers due to $\mathsf{Round}(\mathsf{minLP})$ and $\mathsf{Round}(\mathsf{maxLP})$ will always contain the true exact interval of integers.
If $\mathsf{Round}(\mathsf{minLP})$ or $\mathsf{Round}(\mathsf{maxLP})$ introduces redundant integers, then in the next iteration of $i$, the corresponding regions of \eqref{eq:LP1} or \eqref{eq:LP2} will be infeasible.
Subsequently, we include Line~\ref{ilast} to Line~\ref{infeasible} specifically to eliminate these redundancies for the last iteration $i=r-1$.
At the end of \hyperref[alg:interlacingLPoddeven]{$\mathsf{InterlacingEven}$}, we substitute $x$ with $x+1$ to obtain all possible $f(x)$, and then multiply each one by $\gcd(p(x), p^\prime(x))$.

\begin{algorithm}[htbp]
\capstart
\caption{$\mathsf{InterlacingEven}(p(x))$}
\label{alg:interlacingLPoddeven}
\SetKwData{out}{out}
\SetKwData{temp}{temp}
\SetKwData{fixed}{fixed}
\SetKwData{constraints}{constraints}
\SetKwData{vec}{vec}
\SetKwData{minlp}{minLP}
\SetKwData{maxlp}{maxLP}
\SetKwData{start}{start}
\SetKwData{en}{end}
\SetKw{KwBy}{by}
\SetKwFunction{True}{True}
\SetKwInOut{Input}{Input}\SetKwInOut{Output}{Output}
\Input{A Seidel trace polynomial $p(x)$ of odd degree $n \geqslant 3$ where $3 \leqslant \deg \Min(p, x) \leqslant n$.}
\Output{The set $\mathcal{I}(p(x))$ of all Seidel interlacing polynomials of $p(x)$.}

$r \gets \deg \Min(p, x)$\;

Let $\displaystyle \Min(p, x) = \sum_{i=0}^r b_i x^{r-i}$ for some integers $b_0, b_1, \dots, b_r$\;

Initialise each of \temp and \out as an empty array\;

Let $B$ be a square matrix of order $r$ such that for all $i, j \in \{1, 2, \dots, r\}$, we have that $B_{i, j}$ is equal to a numerical approximation of the $i$th entry of $\coeff(\Min_j(p, x-1))$ with a suitably high digit precision\;\label{numB}

Initialise $\bm{\gamma}$ as a vector of variables $\gamma_1, \gamma_2, \dots, \gamma_r$, and $\bar{\bm{\gamma}} \gets B \bm{\gamma}$\;\label{gammabar}

Append the vector $\mathsf{vec}_0 = \left( 1, b_1-r+1, b_2+n-1+\frac{1}{2}(r-1-2b_1)(r-2) \right)^\intercal$ to \out\;\label{first3coeffs}

$\displaystyle \fixed \gets \left\{ \bm{\gamma} \geqslant \bm{0}, \bar{\bm{\gamma}}(1) = 1, \bar{\bm{\gamma}}(2) = \mathsf{vec}_0(2), \bar{\bm{\gamma}}(3) = \mathsf{vec}_0(3) \right\}$\;

\For{$i = 3$ \KwTo $r-1$}
        {
            \For{$j = 1$ \KwTo $\# \out$}
            {
                $\vec \gets$ the $j$th vector in \out, and $\constraints \gets \fixed$\;

                \lFor{$k=4$ \KwTo $i$}
                {append $\bar{\bm{\gamma}}(k) = \vec(k)$ to \constraints}
                
                $\minlp \gets$ the minimum value of $\bar{\bm{\gamma}}(i+1)$ subject to \constraints\;\label{LPmin}
                
                $\maxlp \gets$ the maximum value of $\bar{\bm{\gamma}}(i+1)$ subject to \constraints\;\label{LPmax}

                $\displaystyle \start \gets 2^i \left\lceil \frac{\mathsf{Round}(\minlp)}{2^i} \right\rceil$, and $\displaystyle \en \gets 2^i \left\lfloor \frac{\mathsf{Round}(\maxlp)}{2^i} \right\rfloor$\;

                \If{$i=r-1$\label{ilast}}
                {
                    \lIf{the region bounded by \constraints and $\bar{\bm{\gamma}}(r)=\start$ is infeasible}{$\start \gets \start+2^i$}
                    \lIf{the region bounded by \constraints and $\bar{\bm{\gamma}}(r)=\en$ is infeasible}{$\en \gets \en-2^i$}\label{infeasible}
                }
                
                \For{$l=\start$ \KwTo $\en$ \KwBy $2^i$}
                {Append the vector $(\vec(1), \vec(2), \dots, \vec(i), l)^\intercal$ to \temp\;}
            }
            $\out \gets \temp$, and $\temp \gets \varnothing$\;
        }

Convert the vectors in \out to univariate polynomials in $x$\;

Update \out by substituting $x$ with $x+1$ for each polynomial in \out\;

Multiply each polynomial in \out by $\gcd(p(x), p^\prime(x))$\;

\Return{\out}\;

\end{algorithm}

The inputs of \hyperref[alg:interlacingLPevenodd]{$\mathsf{InterlacingPartial}$} are Seidel trace polynomial $p(x)$ of even degree $n \geqslant 6$ where $3 \leqslant \deg \Min(p, x) \leqslant n$, and an integer $\delta$ where $\max(4, \deg \Min(p, x)-1) \leqslant \delta \leqslant n-1$.
The output is the set $\mathcal{I}(p(x), \delta)$.
Let $r$ be the degree of $\Min(p, x)$ so $3 \leqslant r \leqslant n$.
Define $\Quo(p, x) \coloneqq \gcd(p(x), p^\prime(x))$ and let $q(x) \in \mathcal{I}(p(x), \delta)$.
By Proposition~\ref{prop:interlacingchifactor}, we have that $q(x) = \Quo(p, x) \cdot f(x)$ for some monic real-rooted integer polynomial $f(x)$ that interlaces $\Min(p, x)$.
\hyperref[alg:interlacingLPoddeven]{$\mathsf{InterlacingEven}$} and \hyperref[alg:interlacingLPevenodd]{$\mathsf{InterlacingPartial}$} are largely similar, except for the divisibility and number-theoretic conditions that have to be satisfied by $\coeff(q(x-1))$, and consequently, $\coeff(f(x-1))$.
Line~\ref{adaptstart} to Line~\ref{adaptend} of \hyperref[alg:interlacingLPevenodd]{$\mathsf{InterlacingPartial}$} are adapted from Line~\ref{branchout} to Line~\ref{i>3end} of \hyperref[alg:Seidelcharalgodd]{$\mathsf{FeasiblePartial}$}, where we utilise \hyperref[alg:checkmod]{$\mathsf{ModCheck}$}.
Furthermore, in Line~\ref{rlessn} and Line~\ref{qbarfilter} of \hyperref[alg:interlacingLPevenodd]{$\mathsf{InterlacingPartial}$}, if $r \leqslant n-1$ then we remove any candidate $\bar{q}(x) = q(x-1)$ if $\bar{q}(x)$ is not weakly type~2, or if there exists an integer $i$ such that $\max(4, r) \leqslant i \leqslant \delta$ and $\hyperref[alg:checkmod]{\mathsf{ModCheck}}(\bar{q}(x), i)$ returns \texttt{False}.

\begin{algorithm}[htbp]
\capstart
\caption{$\mathsf{InterlacingPartial}(p(x), \delta)$}
\label{alg:interlacingLPevenodd}
\SetKwData{out}{out}
\SetKwData{temp}{temp}
\SetKwData{fixed}{fixed}
\SetKwData{constraints}{constraints}
\SetKwData{vec}{vec}
\SetKwData{minlp}{minLP}
\SetKwData{maxlp}{maxLP}
\SetKwData{pow}{pow}
\SetKwData{start}{start}
\SetKwData{en}{end}
\SetKw{KwBy}{by}
\SetKwFunction{True}{True}
\SetKwInOut{Input}{Input}\SetKwInOut{Output}{Output}
\Input{A Seidel trace polynomial $p(x)$ of even degree $n \geqslant 6$ where $3 \leqslant \deg \Min(p, x) \leqslant n$, and an integer $\delta$ where $\max(4, \deg \Min(p, x)-1) \leqslant \delta \leqslant n-1$.}
\Output{The set $\mathcal{I}(p(x), \delta)$ of all $\delta$-partial Seidel interlacing polynomials of $p(x)$.}

$r \gets \deg \Min(p, x)$, and $\Quo(p, x) \gets \gcd(p(x), p^\prime(x))$\;

Let $\displaystyle \Min(p, x) = \sum_{i=0}^r b_i x^{r-i}$ for some integers $b_0, b_1, \dots, b_r$\;

Initialise each of \temp and \out as an empty array\;

Let $B$ be a square matrix of order $r$ such that for all $i, j \in \{1, 2, \dots, r\}$, we have that $B_{i, j}$ is equal to a numerical approximation of the $i$th entry of $\coeff(\Min_j(p, x-1))$ with a suitably high digit precision\;

Initialise $\bm{\gamma}$ as a vector of variables $\gamma_1, \gamma_2, \dots, \gamma_r$, and $\bar{\bm{\gamma}} \gets B \bm{\gamma}$\;

Append the vector $\mathsf{vec}_0 = \left( 1, b_1-r+1, b_2+n-1+\frac{1}{2}(r-1-2b_1)(r-2) \right)^\intercal$ to \out\;

$\displaystyle \fixed \gets \left\{ \bm{\gamma} \geqslant \bm{0}, \bar{\bm{\gamma}}(1) = 1, \bar{\bm{\gamma}}(2) = \mathsf{vec}_0(2), \bar{\bm{\gamma}}(3) = \mathsf{vec}_0(3) \right\}$\;

\For{$i = 3$ \KwTo $r-1$}
        {
            \For{$j = 1$ \KwTo $\# \out$}
            {
                $\vec \gets$ the $j$th vector in \out, and $\constraints \gets \fixed$\;

                \lFor{$k=4$ \KwTo $i$}
                {append $\bar{\bm{\gamma}}(k) = \vec(k)$ to \constraints}
                
                $\minlp \gets$ the minimum value of $\bar{\bm{\gamma}}(i+1)$ subject to \constraints\;
                
                $\maxlp \gets$ the maximum value of $\bar{\bm{\gamma}}(i+1)$ subject to \constraints\;

                \eIf{$i=3$\label{adaptstart}}
                {
                    $\pow \gets 2$, and $\displaystyle \start \gets 4 \left\lceil \frac{\mathsf{Round}(\minlp)}{4} \right\rceil$\;
                }
                {
                    $\pow \gets i$, $\displaystyle C_1 \gets 2^{\pow-1} \left\lceil \frac{\mathsf{Round}(\minlp)}{2^{\pow-1}} \right\rceil$, and $C_2 \gets C_1+2^{\pow-1}$\;
                    $\bar{P}_1(x) \gets C_1 \cdot x^{r-1-i}$, and $\bar{P}_2(x) \gets C_2 \cdot x^{r-1-i}$\;
                    \For{$m=0$ \KwTo $i-1$}{$\bar{P}_1(x) \gets \bar{P}_1(x) + \vec(m+1) \cdot x^{r-1-m}$, and $\bar{P}_2(x) \gets \bar{P}_2(x) + \vec(m+1) \cdot x^{r-1-m}$\;}
                    \uIf{$\hyperref[alg:checkmod]{\mathsf{ModCheck}}$\(\left( \Quo(p, x-1) \cdot \bar{P}_1(x), i \right) = \True\)}{$\start \gets C_1$\;}
                    \uElseIf{$\hyperref[alg:checkmod]{\mathsf{ModCheck}}$\(\left( \Quo(p, x-1) \cdot \bar{P}_2(x), i \right) = \True\)}{$\start \gets C_2$\;}
                    \Else{continue\;\label{adaptend}}
                }
                
                $\displaystyle \en \gets \start + 2^{\pow} \left\lfloor \frac{\mathsf{Round}(\maxlp)-\start}{2^{\pow}} \right\rfloor$\;

                \If{$i=r-1$}
                {
                    \lIf{the region bounded by \constraints and $\bar{\bm{\gamma}}(r)=\start$ is infeasible}{$\start \gets \start+2^{\pow}$}
                    \lIf{the region bounded by \constraints and $\bar{\bm{\gamma}}(r)=\en$ is infeasible}{$\en \gets \en-2^{\pow}$}
                }
                
                \For{$l=\start$ \KwTo $\en$ \KwBy $2^{\pow}$}
                {Append the vector $(\vec(1), \vec(2), \dots, \vec(i), l)^\intercal$ to \temp\;}
            }
            $\out \gets \temp$, and $\temp \gets \varnothing$\;
        }

Convert the vectors in \out to univariate polynomials in $x$\;

Multiply each polynomial in \out by $\Quo(p, x-1)$;

\If{$r \leqslant n-1$\label{rlessn}}{Remove any polynomial $\bar{q}(x)$ in \out if $\bar{q}(x)$ is not weakly type~2, or if there exists an integer $i$ such that $\max(4, r) \leqslant i \leqslant \delta$ and $\hyperref[alg:checkmod]{\mathsf{ModCheck}}(\bar{q}(x), i) = \False$\;}\label{qbarfilter}

Update \out by substituting $x$ with $x+1$ for each remaining polynomial in \out\;

\Return{\out}\;

\end{algorithm}

Finally, Example~\ref{ex:interlacingseidelchi} below illustrates a method to show whether a given integer polynomial $p(x)$ is not Seidel-realisable by utilising the set $\mathcal{I}(p(x))$ and \cite[Theorem 2.3]{GSY21}.
This strategy is central to and is a recurring theme in \cite{GS24, GSY21, GSY23}.

\begin{ex} \label{ex:interlacingseidelchi}
    Let $p(x) = (x+1)^2(x-1)(x^3-x^2-13x+5) \in \mathcal{F}_6$.
    By running \hyperref[alg:numrealrooted]{$\mathsf{AllRealRooted}$} and checking interlacing, we find that the cardinality of $\mathcal{J}(p(x))$ is 67.
    By running \hyperref[alg:interlacingLPevenodd]{$\mathsf{InterlacingPartial}$}, we find that the set $\mathcal{I}(p(x), 4)$ consists of three polynomials: 
    \begin{align*}
        q_1(x) &= (x+1)^2 (x^3-2x^2-7x+4), \\
        q_2(x) &= (x+1)^2(x-1)(x^2-x-8), \\
        q_3(x) &= (x+3)(x+1)x(x-1)(x-3).
    \end{align*}
    Furthermore, we have $\mathcal{I}(p(x)) = \mathcal{I}(p(x), 5) = \{ q_2(x) \}$.
    By Remark~\ref{rem:deg5n6}, we have that $q_2(x) \in \mathcal{R}_5$.
    However, since $6q_2(x) \neq p^{\prime}(x)$, by \cite[Theorem 2.3]{GSY21}, we conclude that $p(x) \notin \mathcal{R}_6$.
\end{ex}

\bibliographystyle{amsplainedit}
\bibliography{biblio}

\providecommand{\bysame}{\leavevmode\hbox to3em{\hrulefill}\thinspace}
\providecommand{\MR}{\relax\ifhmode\unskip\space\fi MR }
\providecommand{\MRhref}[2]{%
  \href{http://www.ams.org/mathscinet-getitem?mr=#1}{#2}
}
\providecommand{\href}[2]{#2}
\begin{thebibliography}{10}

\bibitem{AP08}
Juli\'an Aguirre and Juan~C. Peral, \emph{The trace problem for totally
  positive algebraic integers}, Number Theory and Polynomials, London Math.
  Soc. Lecture Note Ser., vol. 352, Cambridge Univ. Press, Cambridge, 2008,
  with an appendix by Jean-Pierre Serre, pp.~1--19.

\bibitem{Artin91}
Michael Artin, \emph{Algebra}, Prentice Hall, Inc., Englewood Cliffs, NJ, 1991.

\bibitem{BY14}
Alexander Barg and Wei-Hsuan Yu, \emph{New bounds for equiangular lines},
  Discrete Geometry and Algebraic Combinatorics, Contemp. Math., vol. 625,
  Amer. Math. Soc., Providence, RI, 2014, pp.~111--121.

\bibitem{BHL88}
Andrew~C. Bartlett, Christopher~V. Hollot, and Huang Lin, \emph{Root locations
  of an entire polytope of polynomials: {I}t suffices to check the edges},
  Math. Control Signals Systems \textbf{1} (1988), no.~1, 61--71.

\bibitem{BPR06}
Saugata Basu, Richard Pollack, and Marie-Fran\c{c}oise Roy, \emph{Algorithms in
  {R}eal {A}lgebraic {G}eometry}, 2nd ed., Algorithms Comput. Math., vol.~10,
  Springer-Verlag, Berlin, 2006.

\bibitem{BZ06}
Matthias Beck and Thomas Zaslavsky, \emph{Inside-out polytopes}, Adv. Math.
  \textbf{205} (2006), no.~1, 134--162.

\bibitem{Borwein02}
Peter Borwein, \emph{Computational {E}xcursions in {A}nalysis and {N}umber
  {T}heory}, CMS Books in Mathematics/Ouvrages de Math\'ematiques de la SMC,
  vol.~10, Springer-Verlag, New York, 2002.

\bibitem{CS07}
Maria Chudnovsky and Paul Seymour, \emph{The roots of the independence
  polynomial of a clawfree graph}, J. Combin. Theory Ser. B \textbf{97} (2007),
  no.~3, 350--357.

\bibitem{LMFV22}
David de~Laat, Fabr\'icio~C. Machado, Fernando~M. de~Oliveira~Filho, and Frank
  Vallentin, \emph{{$k$}-{P}oint semidefinite programming bounds for
  equiangular lines}, Math. Program. \textbf{194} (2022), no.~1--2, 533--567.

\bibitem{DF04}
David~S. Dummit and Richard~M. Foote, \emph{Abstract {A}lgebra}, 3rd ed., John
  Wiley \& Sons, Inc., Hoboken, NJ, 2004.

\bibitem{Fisk06}
Steve Fisk, \emph{Polynomials, roots, and interlacing}, arXiv preprint,
  \url{https://arxiv.org/abs/math/0612833}.

\bibitem{Fisk05}
Steve Fisk, \emph{A very short proof of {C}auchy’s interlace theorem for
  eigenvalues of {H}ermitian matrices}, Amer. Math. Monthly \textbf{112}
  (2005), no.~2, 118.

\bibitem{FS09}
Philippe Flajolet and Robert Sedgewick, \emph{Analytic {C}ombinatorics},
  Cambridge University Press, Cambridge, 2009.

\bibitem{Gautschi12}
Walter Gautschi, \emph{Numerical {A}nalysis}, 2nd ed., Birkh\"auser Boston,
  Inc., Boston, MA, 2012.

\bibitem{GKZ94}
Israel~M. Gelfand, Mikhail~M. Kapranov, and Andrei~V. Zelevinsky,
  \emph{Discriminants, {R}esultants, and {M}ultidimensional {D}eterminants},
  Mathematics: Theory \& Applications, Birkh\"auser Boston, Inc., Boston, MA,
  1994.

\bibitem{GKMS16}
Gary R.~W. Greaves, Jack~H. Koolen, Akihiro Munemasa, and Ferenc
  Sz\"oll\H{o}si, \emph{Equiangular lines in {E}uclidean spaces}, J. Combin.
  Theory Ser. A \textbf{138} (2016), 208--235.

\bibitem{GS24}
Gary R.~W. Greaves and Jeven Syatriadi, \emph{Real equiangular lines in
  dimension 18 and the {J}acobi identity for complementary subgraphs}, J.
  Combin. Theory Ser. A \textbf{201} (2024), Paper No. 105812, 27 pp.

\bibitem{GSY21}
Gary R.~W. Greaves, Jeven Syatriadi, and Pavlo Yatsyna, \emph{Equiangular lines
  in low dimensional {E}uclidean spaces}, Combinatorica \textbf{41} (2021),
  no.~6, 839--872.

\bibitem{GSY23}
Gary R.~W. Greaves, Jeven Syatriadi, and Pavlo Yatsyna, \emph{Equiangular lines
  in {E}uclidean spaces: {D}imensions 17 and 18}, Math. Comp. \textbf{92}
  (2023), no.~342, 1867--1903.

\bibitem{GY19}
Gary R.~W. Greaves and Pavlo Yatsyna, \emph{On equiangular lines in 17
  dimensions and the characteristic polynomial of a {S}eidel matrix}, Math.
  Comp. \textbf{88} (2019), no.~320, 3041--3061.

\bibitem{Huang19}
Hao Huang, \emph{Induced subgraphs of hypercubes and a proof of the sensitivity
  conjecture}, Ann. of Math. (2) \textbf{190} (2019), no.~3, 949--955.

\bibitem{Hwang04}
Suk-Geun Hwang, \emph{Cauchy's interlace theorem for eigenvalues of {H}ermitian
  matrices}, Amer. Math. Monthly \textbf{111} (2004), no.~2, 157--159.

\bibitem{KT19}
Emily~J. King and Xiaoxian Tang, \emph{New upper bounds for equiangular lines
  by pillar decomposition}, SIAM J. Discrete Math. \textbf{33} (2019), no.~4,
  2479--2508.

\bibitem{LS73}
Petrus W.~H. Lemmens and Johan~J. Seidel, \emph{Equiangular lines}, J. Algebra
  \textbf{24} (1973), 494--512.

\bibitem{MSS15}
Adam~W. Marcus, Daniel~A. Spielman, and Nikhil Srivastava, \emph{Interlacing
  families {I}: {B}ipartite {R}amanujan graphs of all degrees}, Ann. of Math.
  (2) \textbf{182} (2015), no.~1, 307--325.

\bibitem{MSS15II}
Adam~W. Marcus, Daniel~A. Spielman, and Nikhil Srivastava, \emph{Interlacing
  families {II}: {M}ixed characteristic polynomials and the {K}adison--{S}inger
  problem}, Ann. of Math. (2) \textbf{182} (2015), no.~1, 327--350.

\bibitem{MS04}
James McKee and Chris Smyth, \emph{Salem numbers of trace {$-2$} and traces of
  totally positive algebraic integers}, Algorithmic Number Theory, Lecture
  Notes in Comput. Sci., vol. 3076, Springer, Berlin, 2004, pp.~327--337.

\bibitem{MS21}
James McKee and Chris Smyth, \emph{Around the {U}nit {C}ircle: {M}ahler
  {M}easure, {I}nteger {M}atrices and {R}oots of {U}nity}, Universitext,
  Springer, Cham, 2021.

\bibitem{Neumaier01}
Arnold Neumaier, \emph{Introduction to {N}umerical {A}nalysis}, Cambridge
  University Press, Cambridge, 2001.

\bibitem{Robinson64}
Raphael~M. Robinson, \emph{Algebraic equations with span less than {$4$}},
  Math. Comp. \textbf{18} (1964), 547--559.

\bibitem{Schur18}
Issai Schur, \emph{\"{U}ber die {V}erteilung der {W}urzeln bei gewissen
  algebraischen {G}leichungen mit ganzzahligen {K}oeffizienten}, Math. Z.
  \textbf{1} (1918), no.~4, 377--402.

\bibitem{Siegel45}
Carl~L. Siegel, \emph{The trace of totally positive and real algebraic
  integers}, Ann. of Math. (2) \textbf{46} (1945), 302--312.

\bibitem{Smith24}
Alexander Smith, \emph{Algebraic integers with conjugates in a prescribed
  distribution}, Ann. of Math. (2) \textbf{200} (2024), no.~1, 71--122.

\bibitem{Smyth84}
Chris Smyth, \emph{Totally positive algebraic integers of small trace}, Ann.
  Inst. Fourier (Grenoble) \textbf{34} (1984), no.~3, 1--28.

\bibitem{Ueberhuber97}
Christoph~W. Ueberhuber, \emph{Numerical {C}omputation 1: {M}ethods,
  {S}oftware, and {A}nalysis}, Springer-Verlag, Berlin, 1997.

\bibitem{Yu17}
Wei-Hsuan Yu, \emph{New bounds for equiangular lines and spherical two-distance
  sets}, SIAM J. Discrete Math. \textbf{31} (2017), no.~2, 908--917.

\end{thebibliography}

\end{document}